\documentclass[leqno]{article}

\usepackage[frenchb,english]{babel}
\usepackage[utf8]{inputenc}
\usepackage{amsmath}
\usepackage{amssymb}
\usepackage{amsfonts}
\usepackage{enumerate}
\usepackage{vmargin}
\usepackage[all]{xy}
\usepackage{mathrsfs}
\usepackage{mathtools}
\usepackage{lmodern}
\usepackage{slashed}
\usepackage[colorlinks=true,linkcolor=blue,pagebackref=true]{hyperref}%
\setmarginsrb{3cm}{3cm}{3.5cm}{3cm}{0cm}{0cm}{1.5cm}{3cm}
\usepackage{comment}

\usepackage{accents}

\newlength{\dhatheight}

\newcommand{\A}{\mathrm{A}}

\newcommand{\B}{\mathrm{B}} 

\newcommand{\I}{\mathrm{I}} 

\let\L\relax 
\newcommand{\L}{\mathrm{L}}

\newcommand{\M}{\mathrm{M}}

\let\P\relax 
\newcommand{\P}{\mathrm{P}} 

\let\cal\relax
\newcommand{\cal}{\mathcal}
\newcommand{\Z}{\mathrm{Z}}

\newcommand{\VN}{\mathrm{VN}}
\newcommand{\la}{\langle}
\newcommand{\ra}{\rangle}

\newcommand{\conv}{\mathrm{conv}}
\newcommand{\Proj}{\mathrm{Proj}}

\renewcommand{\leq}{\ensuremath{\leqslant}}
\renewcommand{\geq}{\ensuremath{\geqslant}}
\newcommand{\qed}{\hfill \vrule height6pt  width6pt depth0pt}

\newcommand{\norm}[1]{\left\Vert#1\right\Vert}

\newcommand{\co}{\colon}

\newcommand{\ovl}{\overline}

\newcommand{\ov}{\overset}

\newcommand{\Face}{\mathrm{Face}}

\newcommand{\rank}{\mathrm{rank}}

\renewcommand{\d}{\mathop{}\mathopen{}\mathrm{d}} 

\DeclareMathOperator{\Span}{span} 
\DeclareMathOperator{\Irr}{Irr} 

\newtheorem{thm}{Theorem}[section]

\newtheorem{prop}[thm]{Proposition}

\newtheorem{cor}[thm]{Corollary}
\newtheorem{lemma}[thm]{Lemma}

\newtheorem{remark}[thm]{Remark}
\newtheorem{example}[thm]{Example}

\newenvironment{proof}[1][]{\noindent {\it Proof #1} : }{\hbox{~}\qed
\smallskip
}

\usepackage{tocloft}
\numberwithin{equation}{section}
\usepackage[nottoc,notlot,notlof]{tocbibind}

\let\OLDthebibliography\thebibliography
\renewcommand\thebibliography[1]{
  \OLDthebibliography{#1}
  \setlength{\parskip}{0pt}
  \setlength{\itemsep}{0pt plus 0.3ex}
}

\newcommand\reallywidehat[1]{\arraycolsep=0pt\relax%
\begin{array}{c}
\stretchto{
  \scaleto{
    \scalerel*[\widthof{\ensuremath{#1}}]{\kern-.5pt\bigwedge\kern-.5pt}
    {\rule[-\textheight/2]{1ex}{\textheight}} 
  }{\textheight} %
}{0.5ex}\\           
#1\\                 
\rule{-1ex}{0ex}
\end{array}
}

\begin{document}
\selectlanguage{english}
\title{\bfseries{On the convex structure of the space of quantum channels which act as Fourier multipliers}}
\date{}
\author{\bfseries{C\'edric Arhancet and Lei Li}}
\maketitle


\begin{abstract}
If $G$ is a compact group, continuous normalized positive definite functions are in one-to-one correspondence with unital quantum channels acting as Fourier multipliers on the group von Neumann algebra $\mathrm{VN}(G)$. We study the geometry of the convex set $\mathrm{P}_1(G)$ of normalized positive definite functions, equipped with the topology induced by the norm topology of the Fourier algebra $\mathrm{A}(G)$, and its relation with the structure of $\mathrm{VN}(G)$. We show that the von Neumann algebras of two compact groups $G$ and $H$ are $*$-isomorphic if and only if the convex sets $\mathrm{P}_1(G)$ and $\mathrm{P}_1(H)$ are affinely homeomorphic. We also describe the group of affine homeomorphisms of $\mathrm{P}_1(G)$ in terms of Jordan $*$-automorphisms of $\mathrm{VN}(G)$.
\end{abstract}


\makeatletter
 \renewcommand{\@makefntext}[1]{#1}
 \makeatother
 \footnotetext{
 2020 {\it Mathematics subject classification:}
  43A77, 43A35, 52A05, 94A40. 
\\
{\it Key words}: group von Neumann algebra, positive definite function, compact group, convex set, affine homeomorphism, Fourier multiplier quantum channel.}

{
  \hypersetup{linkcolor=blue}
 \tableofcontents
}


\section{Introduction}
\label{sec:Introduction}

Quantum channels are the basic morphisms of quantum information theory. In the Heisenberg picture, a quantum channel is a weak* continuous unital completely positive map between von Neumann algebras. In this paper, we focus on a natural subclass of quantum channels arising from harmonic analysis, namely those channels which act as Fourier multipliers on group von Neumann algebras.

If $G$ is a locally compact group equipped with a left Haar measure, recall that the group von Neumann algebra $\VN(G)$ is the von Neumann algebra generated by the range $\lambda(G)$ of the left regular representation $\lambda$ of $G$ on the complex Hilbert space $\L^2(G)$. Note that for any $s \in G$ the left translation operator $\lambda_s \co \L^2(G) \to \L^2(G)$ is defined by $(\lambda_s \xi)(t) \ov{\mathrm{def}}{=} \xi(s^{-1}t)$, where $\xi \in \L^2(G)$ and $s,t \in G$. Observe that the subspace $\Span \{\lambda_s : s \in G\}$ is weak* dense in the von Neumann algebra $\VN(G)$. Moreover, if the group $G$ is abelian, then the von Neumann algebra $\VN(G)$ is $*$-isomorphic to the algebra $\L^\infty(\hat{G})$ of essentially bounded functions on the Pontryagin dual $\hat{G}$ of $G$. As basic models of quantum groups, these algebras play a fundamental role in operator algebras. We refer to \cite{BrO08} and \cite{KaL18} for more information.

Given a continuous positive definite function $\varphi \co G \to \mathbb{C}$, one can define, according to \cite[Proposition 5.4.9, p.~184]{KaL18}, a weak* continuous completely positive map  $M_\varphi \co \VN(G) \to \VN(G)$ by
\[
M_\varphi(\lambda_s)
\ov{\mathrm{def}}{=} \varphi(s) \lambda_s,
\quad s \in G.
\]
This map is unital precisely when $\varphi(e)=1$. Such unital maps are called Fourier multiplier quantum channels associated with $G$. They constitute a fundamental class of quantum channels arising from noncommutative harmonic analysis. In \cite{Arh25}, Fourier multiplier channels were investigated, with a particular emphasis on finite groups, and exhaustive descriptions are obtained for concrete examples, such as the quaternion group $\mathbb{Q}_8$ and the dihedral group $\mathbb{D}_4$. As a byproduct, it is shown, remarkably, that each quantum channel acting on the matrix algebra $\M_2$ is unitarily equivalent to the compression of a Fourier multiplier on one of these groups.

Guided by the motivation to understand the differences between the structures of the spaces of Fourier multipliers on groups such as $\mathbb{Q}_8$ and $\mathbb{D}_4$, the aim of the present paper is to study the convex geometry of the set of Fourier multiplier channels through the convex geometry of positive definite functions when $G$ is compact. Let $\P_1(G)$ be the set of continuous positive definite functions $\varphi$ on $G$ such that $\varphi(e)=1$. The bijective correspondence $\varphi \mapsto M_\varphi$ is affine, and it turns the study of convex decompositions of Fourier multiplier channels into the study of convex decompositions inside $\P_1(G)$. 

Our first observation is that the relevant convex set is more naturally described in terms of normal states of $\VN(G)$. Recall that Eymard's identification yields an isometric isomorphism $\A(G) \cong \VN(G)_*$ between the Fourier algebra $\A(G)$ of the locally compact group $G$ and  the predual $\VN(G)_*$ of the von Neumann algebra $\VN(G)$. We show that $\A(G) \cap \P_1(G)$ can be identified affinely and homeomorphically with the normal state space of $\VN(G)$. It follows that a $*$-isomorphism $\VN(G) \cong \VN(H)$ induces an affine homeomorphism between $\A(G) \cap \P_1(G)$ and $\A(H) \cap \P_1(H)$.

We then concentrate on compact groups. In this case, we have $\P_1(G)=\A(G)\cap \P_1(G)$. We equip $\P_1(G)$ with the induced topology coming from the norm topology of $\A(G)$, which identifies $\P_1(G)$ with the normal state space of $\VN(G)$. This allows us to use the general theory of facial structures of state spaces of operator algebras, in particular the correspondence between central projections and split faces developed in \cite{AlS76} and \cite{AlS01}. Our main result states that for compact groups $G$ and $H$, the von Neumann algebras $\VN(G)$ and $\VN(H)$ are $*$-isomorphic if and only if the convex sets $\P_1(G)$ and $\P_1(H)$ are affinely homeomorphic.

The present approach is related in spirit to the work \cite{LWW22}, where norm-one positive definite functions are used to extract structural information about finite groups. Our results provide both a generalization and a converse characterization, completing the result \cite[Theorem 4.6 p.~99]{LWW22}. Finally, in the last section, we investigate the group of affine homeomorphisms of the set $\P_1(G)$ of normalized positive definite functions in the case where $G$ is compact. We give a very concrete description, via normal Jordan $*$-automorphisms of $\VN(G)$.

\paragraph{Structure of the paper.}
In Section \ref{Sec-preliminaries}, we recall basic notions from convex geometry and the correspondence between projections in a von Neumann algebra and faces of its normal state space. In Section \ref{sec-type-I}, we identify the set $\A(G)\cap \P_1(G)$ with the normal state space of $\VN(G)$ and we explain how $*$-isomorphisms of group von Neumann algebras induce affine homeomorphisms at the level of positive definite functions. In Section \ref{sec-compact}, we specialize to compact groups and show how the convex structure of $\P_1(G)$ determines the group von Neumann algebra $\VN(G)$. Finally, in Section \ref{sec-affine-homeo}, we describe the group of affine homeomorphisms of the set $\P_1(G)$ of normalized positive definite functions in the case where the group $G$ is compact.

\section{Preliminaries}
\label{Sec-preliminaries}
\paragraph{Geometry}

Let $C$ be a convex subset of a real vector space. A point $x \in C$ is said to be extreme if it cannot be written in a nontrivial way as a convex combination
of points of $C$, i.e., if for any $t \in (0,1)$ and $y, z \in C$ the equality
\begin{equation}
\label{def-extreme}
x = t y+(1-t)z \quad \text{implies} \quad x=y=z.
\end{equation}
Following \cite[p.~1]{AlS01}, we say that a convex subset $F$ of a convex set $C$ is a face of $C$ if the following implication holds for any $x,y \in C$ and any $t \in (0,1)$
\begin{equation}
\label{def-face}
t x+(1-t)y \in F \quad \text{implies} \quad x,y \in F.
\end{equation}

Recall that two convex subsets $F$ and $G$ of a real vector space $X$ are said to be affinely independent \cite[p.~6]{AlS01} if every point $z$ in their convex hull $\conv (F \cup G)$ can be uniquely expressed as a convex combination
$$
z 
= t x + (1-t)y,
$$
where $0 \leq t \leq 1$, $x \in F$ and $y \in G$. Here ``uniqueness'' means uniqueness up to the obvious indeterminacy of $x$ when $t = 0$ and of $y$ when $t= 1$.

Following \cite[Definition 1.4 p.~6]{AlS01}, we say that a convex set $C$ is the free convex sum of two convex subsets $F$ and $G$ if $C=\conv(F \cup G)$ and if $F$ and $G$ are affinely independent. In this case, we write 
$$
C
=F \oplus_c G.
$$
It is known that if $C = F \oplus_c G$ then the two sets $F$ and $G$ must be faces of $C$. We say that a face $F$ of $C$ is a split face if there exists another face $G$ such that $C = F \oplus_c G$. In this case, $G$ is unique. We call it the complementary split face of $F$.

\paragraph{Affine maps} Let $C$ and $D$ be convex subsets of real vector spaces. Following \cite[p.~2]{AlS01}, a map $T \co C \to D$ is said to be affine if it preserves convex combinations, i.e., 
$$
T(tx + (1 - t)y) 
= t T(x) + (1 - t)T(y), \quad x, y \in C,t \in [0,1].
$$ 
If $C$ and $D$ are convex subsets of real normed spaces, an affine homeomorphism of $C$ onto $D$ is a homeomorphism of $C$ onto $D$ which is an affine map. We will use the following elementary lemma.

\begin{lemma}
\label{lem-affine-homeomorphism-preserves-face-chains}
Let $C$ and $D$ be closed convex subsets of real normed spaces and let $T \co C \to D$ be an affine homeomorphism. Then the map $F \mapsto T(F)$ is an order isomorphism between the families of norm-closed faces of $C$ and $D$. In particular, for every norm-closed face $F$ of $C$, the map $T$ induces a bijection between the strictly increasing chains of nonempty norm-closed faces contained in $F$ and those contained in $T(F)$, preserving their cardinalities and their maximality. Moreover, $T$ maps split faces onto split faces. More precisely, if $C=F \oplus_c G$ then $D=T(F) \oplus_c T(G)$, so that $T(G)$ is the complementary split face of $T(F)$.
\end{lemma}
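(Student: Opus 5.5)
The plan is to check directly that an affine bijection and its inverse transport the defining implications of faces and of free convex sums, and that homeomorphisms preserve closedness. Since $T^{-1}\co D \to C$ is also an affine homeomorphism, it suffices to prove each property for $T$ and then apply it to $T^{-1}$.

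\emph{Faces go to faces.} I first observe that $T^{-1}$ is affine. Given $u,v \in D$ and $t\in[0,1]$, write $u=T(x)$ and $v=T(y)$. Then $T(tx+(1-t)y)=tu+(1-t)v$, and applying $T^{-1}$ gives $T^{-1}(tu+(1-t)v)=tT^{-1}(u)+(1-t)T^{-1}(v)$.

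Now let $F$ be a face of $C$. Its image $T(F)$ is convex, because $T$ is affine and $F$ is convex. Suppose $u,v\in D$, $t\in(0,1)$ and $tu+(1-t)v\in T(F)$. Applying $T^{-1}$ gives $tT^{-1}(u)+(1-t)T^{-1}(v)\in F$. By \eqref{def-face}, $T^{-1}(u)$ and $T^{-1}(v)$ lie in $F$, so $u,v\in T(F)$. Hence $T(F)$ is a face of $D$.

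\emph{Closedness and order.} Since $T$ is a homeomorphism of $C$ onto $D$, it maps relatively closed subsets of $C$ to relatively closed subsets of $D$. Because $C$ and $D$ are closed in their ambient normed spaces, relatively closed is the same as norm-closed. So $T(F)$ is a norm-closed face of $D$. Applying the same argument to $T^{-1}$ shows that $F\mapsto T(F)$ is a bijection with inverse $F'\mapsto T^{-1}(F')$. Since $T$ is injective, $F_1\subseteq F_2$ holds if and only if $T(F_1)\subseteq T(F_2)$, so the map is an order isomorphism.

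\emph{Chains.} The statement about chains follows formally from the order isomorphism. It sends nonempty faces to nonempty faces. It preserves strict inclusions and the property of being contained in $F$ versus $T(F)$. It acts injectively on chains, so cardinality is preserved. Maximality is preserved because any chain properly extending the image chain would pull back under $T^{-1}$ to a chain properly extending the original one.

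\emph{Split faces.} Suppose $C=F\oplus_c G$.
\begin{itemize}
\item First, $T(\conv(F\cup G))=\conv(T(F)\cup T(G))$. This holds because $T$ maps each two-term combination $tx+(1-t)y$ with $x\in F$, $y\in G$ to $tT(x)+(1-t)T(y)$, and every point of $\conv(F\cup G)$ has this two-term form since $F$ and $G$ are convex. Therefore $D=T(C)=\conv(T(F)\cup T(G))$.
\item For affine independence, suppose $tT(x)+(1-t)T(y)=sT(x')+(1-s)T(y')$ with $x,x'\in F$ and $y,y'\in G$. Applying $T^{-1}$ gives $tx+(1-t)y=sx'+(1-s)y'$. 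Uniqueness in $C$ then forces $t=s$, and also $x=x'$ when $t>0$ and $y=y'$ when $t<1$. The corresponding equalities follow after applying $T$.
\end{itemize}
Thus $D=T(F)\oplus_c T(G)$. By the quoted uniqueness of the complementary split face, $T(G)$ is the complementary split face of $T(F)$.

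There is no genuine obstacle here. The only point requiring care is that relative closedness coincides with norm closedness, which is why the hypothesis that $C$ and $D$ are closed is used.
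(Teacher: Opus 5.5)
Your proposal is correct and follows essentially the same route as the paper: you pull convex combinations back through the affine inverse $T^{-1}$ to transport the face property and affine independence, use that $T$ is a homeomorphism for closedness, and use injectivity for the order and chain statements. Your version is slightly more careful than the paper's on three minor points: you check that $T(F)$ is convex, you use the closedness of $C$ and $D$ to identify relatively closed sets with norm-closed ones, and you handle the indeterminacy of $x$ or $y$ when $t\in\{0,1\}$ in the affine independence step.
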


\begin{proof}
Let $F$ be a face of the convex set $C$. Suppose that $y_1,y_2 \in D$ and $t \in (0,1)$ satisfy
\[
ty_1+(1-t)y_2 \in T(F).
\]
Since every bijective affine map between convex sets has an affine inverse, the map $T^{-1} \co D \to C$ is affine. We infer that
\[
tT^{-1}(y_1)+(1-t)T^{-1}(y_2)
=
T^{-1}\bigl(ty_1+(1-t)y_2\bigr)
\in F.
\]
Since $F$ is a face of $C$, it follows that $T^{-1}(y_1) \in F$ and $T^{-1}(y_2) \in F$. Hence $y_1$ and $y_2$ belong to $T(F)$. Thus $T(F)$ is a face of $D$. 

Moreover, $T(F)$ is norm-closed since $T$ is a homeomorphism. Applying the same argument to $T^{-1}$ shows that $F\mapsto T(F)$ is a bijection between the norm-closed faces of
$C$ and those of $D$.

Since $T$ is injective, for any subsets $F_1,F_2$ of the convex set $C$, we have
\[
F_1\subsetneq F_2
\quad\Longleftrightarrow\quad
T(F_1)\subsetneq T(F_2).
\]
Therefore $T$ and $T^{-1}$ establish mutually inverse correspondences between strictly increasing chains of nonempty norm-closed faces, preserving their cardinalities. They also preserve maximality of such chains.

Let $F$ be a split face of $C$ and let $G$ be its complementary split face. Thus $
C = F \oplus_c G$. Since $T$ is affine and surjective, we have
\[
D
=T(C)
=T\bigl(\conv(F \cup G)\bigr)
=\conv\bigl(T(F) \cup T(G)\bigr).
\]
We show that $T(F)$ and $T(G)$ are affinely independent. Suppose that
\[
t y_1+(1-t)y_2
=
s z_1+(1-s)z_2,
\]
where $t,s \in [0,1]$, with $y_1,z_1 \in T(F)$ and $y_2,z_2 \in T(G)$. Since $T^{-1}$ is affine, applying it to the preceding equality gives
\[
tT^{-1}(y_1)
+(1-t)T^{-1}(y_2)
=
sT^{-1}(z_1)
+(1-s)T^{-1}(z_2).
\]
Now, we have $T^{-1}(y_1),T^{-1}(z_1) \in F$ and $T^{-1}(y_2), T^{-1}(z_2) \in G$. Since $F$ and $G$ are affinely independent, we obtain $
t=s$, $
T^{-1}(y_1)
=T^{-1}(z_1)$ and $T^{-1}(y_2)=T^{-1}(z_2)$. The injectivity of $T$ implies that $y_1=z_1$ and $y_2=z_2$. Consequently, we have $
D
=T(F)
\oplus_c T(G)$. So $T(F)$ is a split face. Applying the same argument to $T^{-1}$ shows that $T$ sends split faces onto split faces.
\end{proof}

\paragraph{States of operator algebras} 
We describe the correspondence between central projections and their split faces in the following result, which is based on \cite[Theorem 3.35 p.~143]{AlS01} and \cite[Proposition 3.40 p.~148]{AlS01}. See also \cite[Corollary 5.34 p.~162]{AlS03} and \cite[Theorem 11.5 p.~97]{AlS76}. Recall that the normal state space $\cal{K}$ of a von Neumann algebra $\cal{M}$ is the set of normal states on $\cal{M}$.

\begin{prop}
\label{prop-Alfsen}
Let $\cal{M}$ be a von Neumann algebra with normal state space $\cal{K}$. We denote by $\cal{F}$ the set of all norm-closed faces of $\cal{K}$ and by $\Proj(\cal{M})$ the set of all orthogonal projections in $\cal{M}$, 
each equipped with its natural ordering. Then there exists a bijection $\Phi \co \Proj(\cal{M}) \to \cal{F}$, $p \mapsto \Face(p)$, where 
\begin{equation}
\label{def-Face-p}
\Face(p)
\ov{\mathrm{def}}{=} \{\omega \in \cal{K} : \la p,\omega \ra_{\cal{M},\cal{M}_*} = 1\}.
\end{equation}
Moreover, this correspondence preserves the order, i.e., for any $p,q \in \Proj(\cal{M})$
\begin{equation}
\label{order}
p \leq q 
\Longleftrightarrow \Face(p) \subset \Face(q),
\end{equation}
and 
\begin{equation}
\label{disjoint}
\Face(p \wedge q)
=\Face(p) \cap \Face(q).
\end{equation}
Furthermore, $p$ is a central projection in $\cal{M}$ if and only if $\Face(p)$ is a split face of $\cal{K}$. In this case, the complementary split face of $\Face(p)$ is $\Face(1-p)$.
\end{prop}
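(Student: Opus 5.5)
Since the statement is essentially a compilation of results of Alfsen and Shultz, I will organize the proof around their theory of the normal state space, reducing each assertion to a standard fact about support projections.

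\textbf{Support projections and faces.} Every normal state $\omega \in \cal{K}$ has a support projection $s(\omega)$. This is the smallest projection $p$ with $\omega(p)=1$. Moreover, $\omega(x)=\omega(s(\omega)x s(\omega))$ for all $x$, and $\omega$ is faithful on $s(\omega)\cal{M}s(\omega)$.

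First I check that $\Face(p)$ is a norm-closed face of $\cal{K}$.
\begin{itemize}
\item It is norm closed because it is the preimage of $1$ under the norm-continuous map $\omega \mapsto \omega(p)$.
\item It is a face because $\omega(p)\leq 1$ on $\cal{K}$. So if $t\omega_1(p)+(1-t)\omega_2(p)=1$ with $t\in(0,1)$, then $\omega_1(p)=\omega_2(p)=1$.
\end{itemize}

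\textbf{Order and meets.} The key step is that $\omega\in\Face(p)$ if and only if $s(\omega)\leq p$.
\begin{itemize}
\item For \eqref{order}, the direction $\Rightarrow$ is immediate from this characterization. For $\Leftarrow$, suppose $p \not\leq q$. Then $p(1-q)p\neq 0$, so there is a unit vector $\xi$ in the range of $p$ with $(1-q)\xi \neq 0$. The vector state $\omega_\xi$, taken in a faithful normal representation, lies in $\Face(p)$ but not in $\Face(q)$.
\item Since the order is reflected, $\Phi$ is injective.
\item For \eqref{disjoint}, note that $s(\omega)\leq p$ and $s(\omega)\leq q$ together are equivalent to $s(\omega)\leq p\wedge q$.
\end{itemize}

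\textbf{Surjectivity.} This is the main obstacle. Given a norm-closed face $F$, I set $p=\bigvee_{\omega\in F}s(\omega)$ and aim to show $F=\Face(p)$.

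First, I would show that the supremum is attained by a single state. Since $\cal{M}_*$ is a Banach space, $F$ norm closed and the relevant family of supports is countably generated in the $\sigma$-finite case, one takes a sequence $(\omega_n)$ in $F$ whose supports have join $p$. The state $\omega_0=\sum 2^{-n}\omega_n$ lies in $F$ because $F$ is convex and norm closed, and $s(\omega_0)=p$. In general I would cite \cite[Theorem 3.35]{AlS01} rather than reprove this.

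Next, I would show that every $\omega\in\Face(p)$ lies in $F$. It suffices to do this for $\omega\leq C\omega_0$, since such states are norm dense in $\Face(p)$ by a Radon–Nikodym argument on $p\cal{M}p$. For such $\omega$, we can write $\omega_0=\frac1C\omega+(1-\frac1C)\omega'$ with $\omega'\in\cal{K}$. The face property then gives $\omega\in F$, and norm closure finishes the argument.

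\textbf{Central projections and split faces.} If $p$ is central, every $\omega$ decomposes as $\omega(p\,\cdot)+\omega((1-p)\,\cdot)$. After normalizing, this gives a convex combination of a state in $\Face(p)$ and a state in $\Face(1-p)$. The decomposition is unique because the coefficient $t$ must equal $\omega(p)$ and the components are then determined. Hence $\cal{K}=\Face(p)\oplus_c\Face(1-p)$.

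Conversely, suppose $\Face(p)$ is split with complement $G=\Face(q)$. Because $\Face(p)\cap G=\emptyset$, \eqref{disjoint} gives $p\wedge q=0$, and spanning considerations give $p\vee q=1$. To prove centrality, I would show that $\omega\mapsto\omega(p\,\cdot\,p)$ is positive-definite compatible with the splitting. Alternatively, I would rule out a unitary $u$ with $upu^*\neq p$ by exhibiting a vector state $\omega_\xi$, with $\xi$ in the range of $p$ plus a component in the range of $q$, whose decomposition fails to be unique. This last direction is the most delicate, and here I would rely on \cite[Proposition 3.40]{AlS01}.
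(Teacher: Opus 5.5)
The paper gives no argument for this proposition; it simply quotes \cite[Theorem 3.35]{AlS01} and \cite[Proposition 3.40]{AlS01}. Your plan is compatible with that, and the parts you actually carry out are correct: closedness and the face property of $\Face(p)$, the characterization $\omega\in\Face(p)\iff s(\omega)\leq p$, order reflection via a vector state, the meet formula, injectivity, and ``$p$ central $\Rightarrow$ $\cal{K}=\Face(p)\oplus_c\Face(1-p)$''. Two steps, however, do not work as written.

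\emph{Surjectivity.} You claim that $p=\bigvee_{\omega\in F}s(\omega)$ is the support of a single state of $F$. This is false without a countability hypothesis, so it cannot be settled by citation. For $\cal{M}=\ell^\infty_I$ with $I$ uncountable and $F=\cal{K}$, one has $p=1$, whereas every normal state has countable support. Citing Theorem 3.35 for the whole bijection is fine; citing it for this intermediate claim is not.

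The repair is to localize at the state you want to capture. Let $\omega\in\Face(p)$ and work in the normal GNS representation $\pi_\omega$. There $\pi_\omega(p)\xi_\omega=\xi_\omega$, and $\pi_\omega(p)$ is the projection onto the closed span of the ranges of $\pi_\omega(s(\omega'))$, $\omega'\in F$. Hence $\xi_\omega$ lies in the closed span of countably many of these ranges. This produces $\omega_n\in F$ with $s(\omega)\leq\bigvee_n s(\omega_n)=s(\omega_0)$, where $\omega_0=\sum_n 2^{-n}\omega_n\in F$. Your domination-plus-density argument in $s(\omega_0)\cal{M}s(\omega_0)$ then gives $\omega\in F$, with no $\sigma$-finiteness assumption.

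\emph{Split $\Rightarrow$ central.} Deferring to \cite[Proposition 3.40]{AlS01} is exactly what the paper does. However, the heuristics you list do not amount to a proof. The conditions $p\wedge q=0$ and $p\vee q=1$ also hold for two distinct lines in $\mathbb{C}^2$, which are not central. Also, writing the complement as $\Face(q)$ presupposes that it is norm closed, which is not part of the hypothesis.

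A short direct argument is available. The coefficient $f(\omega)=t$ of the split decomposition is an affine map $\cal{K}\to[0,1]$. Since $\cal{K}$ is the base of the base-norm space $(\cal{M}_*)_{sa}$ (Jordan decomposition), whose dual is $\cal{M}_{sa}$, there is $z\in\cal{M}$ with $0\leq z\leq 1$ and $f(\omega)=\omega(z)$.

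Next, $\{f=1\}=\Face(\chi_{\{1\}}(z))$ and $\{f=0\}=\Face(\chi_{\{0\}}(z))$. Injectivity then gives $\chi_{\{1\}}(z)=p$ and identifies the complement $G$ as $\Face(\chi_{\{0\}}(z))$. Since $\cal{K}=\conv(\Face(p)\cup G)$, every normal state gives mass $1$ to $\chi_{\{0,1\}}(z)$. Hence $z=p$ and $G=\Face(1-p)$.

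Finally, both components of any $\omega\in\cal{K}$ annihilate $px(1-p)$, so $\omega(px(1-p))=0$ for all normal states. Hence $px(1-p)=0=(1-p)xp$ for every $x\in\cal{M}$, i.e.\ $p$ is central.
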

 
In this context, we say that $\Face(p)$ is the face supported by the projection $p$. Note that $\Face(0)=\emptyset$ and $\Face(1)=\cal{K}$. 

\paragraph{Projections} 
First, we will use the following elementary result \cite[Proposition 2.5.4 p.~55]{Sun97}.

\begin{prop}
\label{prop-sum-projections}
Suppose that $(H_i)_{i \in I}$ is a family of closed subspaces of a Hilbert space $H$, which are pairwise orthogonal, with associated orthogonal projections $(p_i)_{i \in I}$. Then the family $(p_i)_{i \in I}$ is unconditionally summable, with respect to the strong operator topology. Moreover, the sum $\sum_{i \in I} p_i$ is the orthogonal projection on the closed subspace spanned by $\bigcup_{i \in I} H_i$.
\end{prop}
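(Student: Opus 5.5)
We prove the statement directly from the definitions; no operator-algebra machinery beyond basic Hilbert space facts is needed. Put $p_F \ov{\mathrm{def}}{=} \sum_{i \in F} p_i$ for finite $F \subset I$. Pairwise orthogonality gives $p_ip_j = \delta_{ij} p_i$, so each $p_F$ is idempotent and selfadjoint. It is therefore the orthogonal projection onto $H_F \ov{\mathrm{def}}{=} \bigoplus_{i \in F} H_i$, the closed subspace spanned by the $H_i$ with $i \in F$; this subspace is closed because it is a finite orthogonal sum of closed subspaces. The net $(p_F)_F$ indexed by finite subsets, ordered by inclusion, is increasing, and $0 \leq p_F \leq 1$ for all $F$.

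\textbf{Step 1 (Cauchy property).} Fix $\xi \in H$. By Pythagoras, $\norm{p_F\xi}^2 = \sum_{i\in F}\norm{p_i\xi}^2$. Since $\norm{p_F \xi} \leq \norm{\xi}$, Bessel's inequality follows: $\sum_{i \in I}\norm{p_i\xi}^2 \leq \norm{\xi}^2$. Hence for finite $F \subset F'$ we get
\[
\norm{p_{F'}\xi-p_F\xi}^2=\sum_{i\in F'\setminus F}\norm{p_i\xi}^2,
\]
and this tends to $0$ along the net by summability. So $(p_F\xi)_F$ is Cauchy and converges, by completeness, to a vector we denote $p\xi$. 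This gives unconditional strong convergence to a bounded linear operator $p$ with $\norm{p}\leq 1$.

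\textbf{Step 2 (identification of $p$).} Let $K$ be the closed span of $\bigcup_i H_i$ and let $q$ be its orthogonal projection.
\begin{itemize}
\item If $\xi \in K^\perp$, then $p_i\xi=0$ for every $i$, so $p\xi=0$.
\item If $\xi \in H_j$, then $p_F\xi=\xi$ as soon as $j\in F$, so $p\xi=\xi$.
\end{itemize}
By linearity and continuity of $p$, we get $p\xi=\xi$ on all of $K$. Since $H = K \oplus K^\perp$, it follows that $p = q$.

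Alternatively, one can pass to the limit in $p_F^2=p_F$ and $p_F^*=p_F$, using weak convergence for the adjoints and uniform boundedness for the products. This shows directly that $p$ is an orthogonal projection, and its range is $K$.

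The only mildly delicate point is the passage to the limit of the net in Step 1. Bessel's inequality, obtained from the uniform bound $\norm{p_F\xi}\leq\norm{\xi}$, handles it cleanly. Everything else is routine.
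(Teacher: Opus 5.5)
Your proof is correct and complete. The paper gives no argument of its own here: the statement is quoted from \cite[Proposition 2.5.4 p.~55]{Sun97}. It is used only to make sense of the strong sum $\sum_{i\in I}p_i$ of minimal central projections in the proof of Proposition~\ref{prop:main-vNG}, so your write-up supplies a self-contained justification that the paper leaves to the reference.

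Your Step 2 observations already suffice on their own. For $\xi \in \Span\bigl(\bigcup_{i\in I}H_i\bigr)$, the vector $p_F\xi$ is eventually equal to $q\xi=\xi$, and for $\xi\in K^\perp$ it equals $q\xi=0$. The subspace $\Span\bigl(\bigcup_{i\in I}H_i\bigr)+K^\perp$ is dense in $H$. The uniform bound $\norm{p_F}\leq 1$ then carries the convergence from this dense set to all of $H$ by an $\varepsilon/3$ argument. This density route, the usual proof that an increasing net of projections converges strongly to the projection onto the closed union of their ranges, gets existence and identification of the limit at once. Your route instead first builds the limit via Pythagoras, Bessel's inequality and completeness, and identifies it afterwards. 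That costs one extra step but yields the identity $\norm{p\xi}^2=\sum_{i\in I}\norm{p_i\xi}^2$.

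The only detail left implicit in your Step 1 is the Cauchy estimate for non-nested index sets. Choose a finite $F_0$ with $\sum_{i\notin F_0}\norm{p_i\xi}^2<\varepsilon^2$. Your displayed identity gives $\norm{p_F\xi-p_{F_0}\xi}<\varepsilon$ for every finite $F\supseteq F_0$, hence $\norm{p_{F_1}\xi-p_{F_2}\xi}<2\varepsilon$ whenever $F_1,F_2\supseteq F_0$.
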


Consider an orthogonal projection $p$ in a von Neumann algebra $\cal{M}$. Following 
\cite[Definition 3.28 p.~89]{AlS03}, we say that $p$ is minimal if $p \not=0$ and if every orthogonal projection $q$ in $\cal{M}$ such that $q \leq p$ is equal to 0 or to $p$. Equivalently, according to \cite[Proposition 6.4.3 p.~420]{KaR97b} $p$ is minimal if and only if we have the equality $p\cal{M}p = \mathbb{C}p$. By \cite[p.~420]{KaR97b}, if $p$ belongs to the center $\Z(\cal{M})$ and is minimal in $\Z(\cal{M})$ then the reduced von Neumann algebra $\cal{M}p$ is a factor. 

\begin{example} \normalfont
\label{example-minimal}
Consider the von Neumann algebra $\B(H)$ of all bounded operators on a complex Hilbert space $H$. By \cite[Example 5.1.7 p.~309]{KaR97a}, the minimal projections are the orthogonal projections on one-dimensional subspaces of $H$.
\end{example}

\begin{example} \normalfont
\label{example-minimal-2}
If $I$ is a set, consider the abelian von Neumann algebra $\ell^\infty_I$ acting on the Hilbert space $\ell^2_I$. By \cite[Lemma 8.6.8 p.~557]{KaR97b} and its proof, the minimal projections are the elements $e_i$, where $i \in I$.
\end{example}

By \cite[Corollary 6.5.3 p.~424]{KaR97b}, 
 a factor is of type I if and only if it contains a minimal projection. Moreover, the same result states that a factor is of type $\I_n$ if and only if $1$ is the sum of $n$ minimal projections.


A von Neumann algebra $\cal{M}$ is said to be atomic if for every non-zero projection $p$ there exists a minimal projection $q$ such that $q \leq p$. The following characterization is \cite[Lemma 3.9 p.~93]{Hel09} (see also \cite[p.~354]{Bla06} and \cite[Exercise 6.9.37 p.~450]{KaR97b}).

\begin{prop}
\label{prop-carac-atomic}
Let $\cal{M}$ be a von Neumann algebra with center $\Z(\cal{M})$. The following conditions are equivalent.

\begin{enumerate}
	\item $\cal{M}$ is atomic.
	
	\item There exists an orthogonal family $(p_i)_{i \in I}$ of minimal projections of $\cal{M}$ such that $1=\sum_{i \in I} p_i$ in the strong operator topology.
	
	\item $\cal{M}$ is of type $\I$ and there exists an orthogonal family $(p_i)_{i \in I}$ of minimal projections of $\Z(\cal{M})$ such that $1=\sum_{i \in I} p_i$ in the strong operator topology.
	
	\item $\cal{M}$ is a direct sum of type $\I$ factors.
\end{enumerate}
\end{prop}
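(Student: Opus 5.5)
The plan is to prove the cycle of implications $1 \Rightarrow 2 \Rightarrow 3 \Rightarrow 4 \Rightarrow 1$, working with Zorn's lemma, Proposition \ref{prop-sum-projections} and the standard type decomposition.

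For $1 \Rightarrow 2$, I will use Zorn's lemma to choose a maximal orthogonal family $(p_i)_{i \in I}$ of minimal projections of $\cal{M}$. By Proposition \ref{prop-sum-projections}, the sum $p=\sum_i p_i$ exists strongly and is a projection in $\cal{M}$. If $1-p \neq 0$, atomicity gives a minimal projection $q \leq 1-p$. This $q$ is orthogonal to every $p_i$, which contradicts maximality, so $1=\sum_i p_i$.

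For $2 \Rightarrow 3$, the first step is to show that $\cal{M}$ is of type $\I$. Each minimal projection $p_i$ is abelian, since $p_i\cal{M}p_i=\mathbb{C}p_i$. The central support $z$ of $\sum_i p_i = 1$ is $1$, so the type $\I$ part of $\cal{M}$ is everything. Next, for each $i$ I take the central support $c(p_i)$. I claim that $c(p_i)$ is a minimal central projection. Indeed, if $z \leq c(p_i)$ is central and nonzero, then $zp_i$ is either $0$ or $p_i$. If $zp_i=0$, then $c(p_i) \leq 1-z$, which forces $z=0$. If $zp_i=p_i$, then $c(p_i)\leq z$. Hence $c(p_i)$ is minimal in $\Z(\cal{M})$. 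Two such central supports are either equal or orthogonal, because their product is central and lies under both. I then keep one representative from each class to get an orthogonal family of minimal central projections. Its sum dominates every $p_i$, so it equals $1$.

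For $3 \Rightarrow 4$, I argue as follows. When $z_i$ is minimal in $\Z(\cal{M})$, the algebra $\cal{M}z_i$ is a factor, as recalled before Example \ref{example-minimal}. It is of type $\I$, being a reduction of a type $\I$ algebra by a central projection. Finally, $\cal{M}\cong \bigoplus_i \cal{M}z_i$ because $1=\sum_i z_i$ with the $z_i$ central and orthogonal.

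For $4 \Rightarrow 1$, write $\cal{M}=\bigoplus_i \cal{M}_i$ with each $\cal{M}_i \cong \B(H_i)$. A type $\I$ factor is isomorphic to some $\B(H)$, and this is the one step where I will need to cite the structure theory rather than prove it. Given a nonzero projection $p=(p_i)$, some $p_i\neq 0$. Pick a unit vector in the range of $p_i$, and let $q$ be the projection onto its span, placed in the $i$-th summand. By Example \ref{example-minimal}, $q$ is minimal in $\B(H_i)$. It remains minimal in $\cal{M}$, since $q\cal{M}q=\mathbb{C}q$, and we have $q\leq p$.

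The main obstacle is the type $\I$ identification in $2 \Rightarrow 3$. I will handle it by citing that a von Neumann algebra in which $1$ is the sum of abelian projections is of type $\I$, from \cite{KaR97b} around Corollary 6.5.3. Everything else is routine central-support bookkeeping.
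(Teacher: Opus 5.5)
Your proof is correct. It differs from the paper only in that the paper gives no argument for Proposition \ref{prop-carac-atomic}: it quotes \cite[Lemma 3.9 p.~93]{Hel09} and points to \cite[p.~354]{Bla06} and \cite[Exercise 6.9.37 p.~450]{KaR97b}.

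Your cycle makes the statement self-contained up to two standard inputs. The first is the identification of type $\I$ factors with some $\B(H)$, which the paper itself uses later via \cite[Theorem 6.6.1 p.~426]{KaR97b}. The second is a criterion for type $\I$. Everything else is Zorn's lemma, Proposition \ref{prop-sum-projections}, and your observation that the central support of a minimal projection is a minimal central projection, so two such supports are equal or orthogonal. The citation buys brevity; your route shows exactly which pieces of type theory are needed.

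Two remarks on the step you flag as the main obstacle. First, the sentence ``the central support of $\sum_i p_i=1$ is $1$'' is vacuous as written. The operative fact is that every abelian projection lies under the type $\I$ central summand $P_{\I}$, so $1=\bigvee_i p_i\leq P_{\I}$.

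Second, you can drop that citation altogether by reordering. Once you have the minimal central projections $z_k$, choose for each $k$ one $p_{i(k)}$ with $c(p_{i(k)})=z_k$, and set $e=\sum_k p_{i(k)}$. For $k\neq l$ and $x\in\cal{M}$ you get $p_{i(k)}xp_{i(l)}=p_{i(k)}z_kz_lxp_{i(l)}=0$. Since each $p_{i(k)}$ is minimal, $exe=\sum_k p_{i(k)}xp_{i(k)}=\sum_k \lambda_k(x)\,p_{i(k)}$ with scalars $\lambda_k(x)$. Hence $e\cal{M}e$ is abelian. Moreover $c(e)=\bigvee_k z_k=1$. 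An abelian projection with central carrier $1$ is exactly what type $\I$ requires, so no external result is needed in $2\Rightarrow 3$.
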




\paragraph{Harmonic analysis}
Let $G$ be a locally compact group. Recall \cite[p.~286]{Dix77} \cite[Definition 1.4.15 p.~22]{KaL18} that a function $f \co G \to \mathbb{C}$ is said to be positive definite if 
\begin{equation}
\label{def-def-pos}
\sum_{j,k=1}^{n}  \ovl{\xi_k} \xi_j f(s_k^{-1}s_j) \geq 0
\end{equation}
for any $\xi_1,\ldots,\xi_n \in \mathbb{C}$ and any $s_1,\ldots,s_n \in G$. Let $\P_1(G)$ be the set of continuous positive definite functions $\varphi$ on $G$ such that $\varphi(e) = 1$. The sum of two positive definite continuous functions is positive definite. If $\varphi$ is a positive definite function and if $\lambda \geq 0$, then $\lambda\varphi$ is also a positive definite function. In particular, the set $\P_1(G)$ is convex.


\section{A description of the normal state space of a group von Neumann algebra}
\label{sec-type-I}

Let $G$ be a locally compact group equipped with a left Haar measure. According to \cite[Theorem 2.4.3 p.~59]{KaL18} and \cite[Lemma 2.3.6 p.~52]{KaL18}, the Fourier algebra $\A(G)$ can be seen as the set of all functions $\ovl{f}*\check{g}$, where $f,g \in \L^2(G)$. Here $\check{g}(s) \ov{\mathrm{def}}{=} g(s^{-1})$ for any $s \in G$. For any $s \in G$, note that
\begin{equation}
\label{convol-utile}
(\ovl{f}*\check{g})(s)
=\int_G \ovl{f(t)}\check{g}(t^{-1}s) \d t
=\int_G \ovl{f(t)} g(s^{-1}t) \d t
=\la f,\lambda_s g \ra_{\L^2(G)},
\end{equation}
where $\la f,g \ra_{\L^2(G)} \ov{\mathrm{def}}{=} \int_G \ovl{f(t)} g(t) \d t$. In other words, the Fourier algebra $\A(G)$ is the set of matrix coefficients of the left regular representation $\lambda \co G \to \B(\L^2(G))$ of the locally compact group $G$.

We denote by $\VN(G)_*$ the predual of the von Neumann algebra $\VN(G)$, namely the Banach space of all normal linear functionals on $\VN(G)$. The positive cone of the predual $\VN(G)_*$ will be denoted by $\VN(G)_*^+$. It consists precisely of the positive normal linear functionals on the von Neumann algebra $\VN(G)$. Recall that Eymard's identification \cite[Th\'eor\`eme 3.10 p.~210]{Eym64} (see also \cite[Theorem 2.3.9 p.~54]{KaL18}) gives an isometric isomorphism $\Theta_G \co \A(G) \to \VN(G)_*$ of Banach spaces, characterized by the duality bracket
\begin{equation}
\label{duality-Eymard}
\big\la T,\ovl{f}*\check{g} \big\ra_{\VN(G),\mathrm{A}(G)}
=\la f,T(g) \ra_{\L^2(G)}, \quad T \in \VN(G), f,g \in \L^2(G).
\end{equation}
Consider some function $\varphi \in \A(G)$. In particular, writing $\varphi = \ovl{f}*\check{g}$ for some functions $f,g \in \L^2(G)$, we see that for any $s \in G$
\begin{align*}
\MoveEqLeft
\la \lambda_s,\varphi \ra_{\VN(G),\mathrm{A}(G)} 
= \la \lambda_{s}, \ovl{f}*\check{g} \ra_{\VN(G),\A(G)}        
\ov{\eqref{duality-Eymard}}{=} \la  f,\lambda_{s}g \ra_{\L^2(G)} 
\ov{\eqref{convol-utile}}{=}(\ovl{f}*\check{g})(s)
=\varphi(s).
\end{align*}
Consequently, for every function $\varphi \in \A(G)$, there exists a unique normal linear functional $\omega \in \VN(G)_*$ such that
\begin{equation}
\label{Eymard-predual}
\varphi(s)
=\omega(\lambda_s), \quad s \in G.
\end{equation}
This means that $\Theta_G(\varphi)=\omega$.

\begin{prop}
\label{prop-first}
Let $G$ be a locally compact group. The isomorphism $\Theta_G \co \A(G) \to \VN(G)_*$ restricts to an affine homeomorphism
\[
\Theta_G \co \A(G)\cap \P_1(G) \to \{\omega \in \VN(G)_*^+ : \omega(1)=1\},
\]
which allows us to identify the subset $\A(G)\cap \P_1(G)$ with the normal state space $\{\omega \in \VN(G)_*^+ : \omega(1)=1\}$ of the von Neumann algebra $\VN(G)$.
\end{prop}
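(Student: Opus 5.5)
Since $\Theta_G \co \A(G) \to \VN(G)_*$ is a linear isometric isomorphism, it is automatically affine and a homeomorphism for the norm topologies, and so is its restriction to any subset, once we know the image. The whole task is therefore to show that $\Theta_G$ maps $\A(G)\cap \P_1(G)$ exactly onto the normal state space. Throughout I use \eqref{Eymard-predual}: $\Theta_G(\varphi)=\omega$ means $\varphi(s)=\omega(\lambda_s)$ for all $s \in G$.

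For the inclusion $\Theta_G(\A(G)\cap\P_1(G)) \subset$ states, let $\varphi \in \A(G)\cap \P_1(G)$ and $\omega=\Theta_G(\varphi)$. Normalization is immediate: $\omega(1)=\omega(\lambda_e)=\varphi(e)=1$. For positivity, take $x=\sum_j \xi_j\lambda_{s_j}$ in the group algebra. Then
\[
\omega(x^*x)=\sum_{j,k}\ovl{\xi_k}\xi_j\,\omega(\lambda_{s_k^{-1}s_j})=\sum_{j,k}\ovl{\xi_k}\xi_j\,\varphi(s_k^{-1}s_j)\geq 0
\]
by \eqref{def-def-pos}. To pass from these elements to all of $\VN(G)_+$, note that every positive $T \in \VN(G)$ is $y^*y$ with $y\in\VN(G)$. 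By Kaplansky density, $y$ is a strong* limit of a bounded net $(x_\alpha)$ in $\Span\lambda(G)$. Then $x_\alpha^*x_\alpha\to y^*y$ in the weak operator topology, and on bounded sets this agrees with the weak* topology. Since $\omega$ is normal, $\omega(y^*y)=\lim\omega(x_\alpha^*x_\alpha)\geq0$. This density step is the only non-formal point.

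Alternatively, one can avoid Kaplansky density. Gelfand--Raikov/GNS theory gives a unitary representation $\pi$ and vector $\xi$ with $\varphi=\la\xi,\pi(\cdot)\xi\ra$. Then $\omega(T)=\la T g,T g\ra$-type formulas are not immediately available, so the density route is preferable.

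For the reverse inclusion, let $\omega$ be a normal state and set $\varphi=\Theta_G^{-1}(\omega)\in\A(G)$, so $\varphi(s)=\omega(\lambda_s)$. Functions in $\A(G)$ are continuous, and $\varphi(e)=\omega(1)=1$. Positive definiteness follows from the same computation read backwards: $\sum\ovl{\xi_k}\xi_j\varphi(s_k^{-1}s_j)=\omega(x^*x)\geq0$, using $\lambda_{s_k}^*\lambda_{s_j}=\lambda_{s_k^{-1}s_j}$. Hence $\varphi\in\A(G)\cap\P_1(G)$, so the restriction is onto the normal state space and is an affine homeomorphism, as claimed.
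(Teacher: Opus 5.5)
Your proof is correct and follows essentially the paper's argument. Positivity of $\Theta_G(\varphi)$ on $\Span \lambda(G)$ comes from positive definiteness and is extended to all of $\VN(G)_+$ by Kaplansky density and normality (you write $T=y^*y$ and approximate $y$, where the paper approximates $x^{1/2}$ and squares), while the converse direction is the same computation read backwards; the abandoned Gelfand--Raikov/GNS paragraph contributes nothing and should simply be deleted.
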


\begin{proof}
Let $\omega \in \VN(G)_*^+$ with $\omega(1)=1$ and define the function $\varphi_\omega \co G \to \mathbb{C}$ by $\varphi_\omega(s) \ov{\mathrm{def}}{=} \omega(\lambda_s)$, where $s \in G$. For any finitely supported function $\xi \co G \to \mathbb{C}$, we have
\begin{align*}
\MoveEqLeft
\sum_{s,t \in G} \varphi_\omega(t^{-1}s)\ovl{\xi_t}\xi_s
=\sum_{s,t \in G} \omega(\lambda_{t^{-1}s})\ovl{\xi_t}\xi_s 
=\sum_{s,t \in G} \omega(\lambda_{t}^*\lambda_s)\ovl{\xi_t} \xi_s\\
&=\omega\Big[\Big(\sum_{t \in G} \ovl{\xi_t}\lambda_t^*\Big)\Big(\sum_{s \in G} \xi_s \lambda_s\Big)\Big]
=\omega\left(\left(\sum_{t\in G}\xi_t\lambda_t\right)^*\left(\sum_{s \in G} \xi_s\lambda_s\right)\right)
\geq 0.
\end{align*}
since the functional $\omega$ is positive. Hence $\varphi_\omega$ is positive definite. Moreover, we have $\varphi_\omega(e)=\omega(1)=1$. Since $\omega$ is normal, we have $\varphi_\omega \in \A(G)$ and $\Theta_G(\varphi_\omega)=\omega$. Thus $\varphi_\omega \in \A(G)\cap \P_1(G)$.


Conversely, let $\varphi \in \A(G)\cap \P_1(G)$. Consider the linear form $\omega \ov{\mathrm{def}}{=} \Theta_G(\varphi)$ in the predual $\VN(G)_*$. For any finite sum $a=\sum_s a_s \lambda_s$, we compute
\begin{align*}
\MoveEqLeft
\omega(a^*a)
=\omega\Big(\big(\sum_s a_s \lambda_s\big)^*\big(\sum_t a_t \lambda_t\big)\Big) 
=\sum_{s,t \in G} \overline{a_s} a_t \omega(\lambda_{s^{-1}t}) 
\ov{\eqref{Eymard-predual}}{=} \sum_{s,t \in G} \overline{a_s} a_t \varphi(s^{-1}t)
\ov{\eqref{def-def-pos}}{\geq} 0.
\end{align*}
Consequently, $\omega$ is positive on the $*$-algebra $\mathcal{P}_G$ generated by $(\lambda_s)_{s \in G}$, which is weak* dense in $\VN(G)$. If $x$ is a positive element of $\VN(G)$, by Kaplansky Density Theorem \cite[Corollary 5.3.6 p.~329]{KaR97a}, there exists a bounded net $(a_i)$ of positive elements of $\mathcal{P}_G$ such that $a_i \to x^{\frac{1}{2}}$ in the strong operator topology. Consequently, by \cite[Proposition 1.2.1 p.~9]{Li92}, we have $a_i^2 \to x$ in the strong operator topology, since $(a_i)$ is bounded. Since the net $(a_i^2)$ is bounded, we have $a_i^2 \to x$ in the weak* topology, according to \cite[pp.~68-69]{Tak02}. Hence $\omega(x)=\lim_i \omega(a_i^2) \geq 0$.  We conclude that $\omega$ is a positive normal functional on $\VN(G)$. Furthermore, observe that $\omega(1)=\varphi(e)=1$. 
 
Since the map $\Theta_G$ is an isometry, the restriction of $\Theta_G\co \A(G) \to \VN(G)_*$ is an affine homeomorphism for the norm topology of $\A(G)$ and the norm topology of $\VN(G)_*$.
\end{proof}

\begin{prop}
\label{prop-useful}
Let $G$ and $H$ be locally compact groups. Suppose that there exists a $*$-isomorphism $\VN(G) \cong \VN(H)$. Then the convex sets $\A(G)\cap \P_1(G)$ and $\A(H) \cap \P_1(H)$ are affinely homeomorphic, where we use the topologies induced by the norm topologies of $\A(G)$ and $\A(H)$.
\end{prop}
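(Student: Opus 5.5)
The plan is to reduce the statement to Proposition \ref{prop-first} by showing that a $*$-isomorphism of von Neumann algebras induces an affine isometry between their normal state spaces. Let $\pi \co \VN(G) \to \VN(H)$ be a $*$-isomorphism. A $*$-isomorphism between von Neumann algebras is automatically normal, that is, weak* continuous: it is an order isomorphism, so it preserves suprema of bounded increasing nets of self-adjoint elements, and this order continuity is equivalent to normality. Hence $\pi$ is the adjoint of a linear bijection $\pi_* \co \VN(H)_* \to \VN(G)_*$, defined by
\[
\pi_*(\omega) \ov{\mathrm{def}}{=} \omega \circ \pi, \quad \omega \in \VN(H)_*.
\]
The same argument applied to $\pi^{-1}$ shows that $\pi_*$ is bijective, with inverse $(\pi^{-1})_*$.

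Next I would check that $\pi_*$ maps the normal state space of $\VN(H)$ onto that of $\VN(G)$. If $\omega$ is a normal state on $\VN(H)$, then $\omega \circ \pi$ is positive because $\pi$ maps positive elements to positive elements. It is unital since $\pi(1)=1$, and it is normal because $\pi$ is weak* continuous. The same reasoning for $\pi^{-1}$ gives surjectivity onto the normal states. The map $\pi_*$ is linear, hence affine. It is also isometric for the predual norms, since $\pi$ is an isometry of $\VN(G)$ onto $\VN(H)$ and
\[
\norm{\omega \circ \pi} = \sup_{\norm{x} \leq 1} |\omega(\pi(x))| = \norm{\omega}.
\]
Consequently, its restriction to the normal state spaces is an affine homeomorphism for the norm topologies.

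Finally, I would compose
\[
\Theta_G^{-1} \circ \pi_* \circ \Theta_H \co \A(H) \cap \P_1(H) \to \A(G) \cap \P_1(G).
\]
By Proposition \ref{prop-first}, both $\Theta_H$ and $\Theta_G^{-1}$, restricted to these convex sets, are affine homeomorphisms onto or from the respective normal state spaces. A composition of affine homeomorphisms is again an affine homeomorphism, which proves the statement. Explicitly, the composite map sends $\varphi$ to the function $s \mapsto \Theta_H(\varphi)(\pi(\lambda_s))$.

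The only step that is not pure bookkeeping is the automatic normality of $\pi$. If one prefers not to quote it, it can be checked directly. A $*$-isomorphism is an order isomorphism, so it preserves suprema of bounded increasing nets of positive elements. By the standard characterization, a positive functional is normal exactly when it preserves such suprema. Therefore $\omega \circ \pi$ is normal whenever $\omega$ is.
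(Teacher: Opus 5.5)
Your proposal is correct and follows essentially the same route as the paper: a $*$-isomorphism is weak* continuous, its isometric preadjoint $\omega \mapsto \omega \circ \pi$ maps normal states onto normal states, and composing with $\Theta_H$ and $\Theta_G^{-1}$ from Proposition \ref{prop-first} gives the affine homeomorphism. The only difference is that you justify automatic normality via order continuity, whereas the paper cites it from the literature.
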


\begin{proof}
Let $\theta \co \VN(G) \to \VN(H)$ be a $*$-isomorphism. Recall that by \cite[Proposition 1.12.3 p.~61]{Li92}, the map $\theta$ is a weak* homeomorphism. Now, consider the preadjoint $\theta_* \co \VN(H)_* \to \VN(G)_*$ defined by
\begin{equation}
\label{predual-map}
\theta_*(\omega)
=\omega \circ \theta, \quad \omega \in \VN(H)_*.
\end{equation}
It is linear, isometric, and maps $\VN(H)_*^+$ onto $\VN(G)_*^+$. Moreover, we have the equivalence
\[
\theta_*(\omega)(1)=1
\ov{\eqref{predual-map}}{\iff} \omega \circ \theta(1)=1
\iff \omega(1)
=1.
\]
Hence $\theta_*$ maps the normal state space of $\VN(H)$ onto the normal state space of $\VN(G)$, affinely and homeomorphically. Therefore, the composition
\[
T
\ov{\mathrm{def}}{=}
\Theta_G^{-1} \circ \theta_* \circ \Theta_H
\co
\A(H) \cap \P_1(H) \to \A(G) \cap \P_1(G)
\]
is an affine homeomorphism. 
\end{proof}

Recall that by definition \cite[Definition 2.1.5 p.~40]{KaL18}, the Fourier-Stieltjes algebra $\B(G)$ of a locally compact group $G$ consists of all finite linear combinations of continuous positive definite functions. It is known that $\A(G)$ is an ideal of $\B(G)$, see \cite[Proposition 2.3.3 p.~51]{KaL18}. According to \cite[p.~209]{Eym64}, if $G$ is compact then we have $\B(G)=\A(G)$. In this particular case, we have 
\begin{equation}
\label{compact-case}
\A(G) \cap \P_1(G)
=\P_1(G).
\end{equation}
From Proposition \ref{prop-useful}, we deduce the following result.

\begin{prop}
\label{prop:compact-converse}
Let $G$ and $H$ be compact groups. Suppose that there exists a $*$-isomorphism $\VN(G) \cong \VN(H)$. Then the convex sets $\P_1(G)$ and $\P_1(H)$ are affinely homeomorphic, where we use the topologies induced by the norm topologies of $\A(G)$ and $\A(H)$.
\end{prop}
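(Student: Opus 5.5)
The statement follows directly from Proposition \ref{prop-useful} combined with the identity \eqref{compact-case}. Fix a $*$-isomorphism $\theta \co \VN(G) \to \VN(H)$. Applying Proposition \ref{prop-useful}, we obtain an affine homeomorphism
\[
T = \Theta_G^{-1} \circ \theta_* \circ \Theta_H \co \A(H) \cap \P_1(H) \to \A(G) \cap \P_1(G),
\]
where both sets carry the topologies induced by the norm topologies of $\A(H)$ and $\A(G)$. The inverse $T^{-1}$ is the analogous affine homeomorphism built from $\theta^{-1}$. The direction from $H$ to $G$ is immaterial, since affine homeomorphy is a symmetric relation.

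Next we specialize to compact groups. Since $G$ and $H$ are compact, Eymard's result $\B(G)=\A(G)$ applies, and every continuous positive definite function lies in $\B(G)$ by definition. This gives \eqref{compact-case} for both groups:
\[
\A(G)\cap\P_1(G)=\P_1(G), \qquad \A(H)\cap\P_1(H)=\P_1(H).
\]
The topology on $\P_1(G)$ specified in the statement is exactly the one induced by the norm of $\A(G)$, and likewise for $H$. Consequently $T$ is an affine homeomorphism from $\P_1(H)$ onto $\P_1(G)$, and the proof is complete.

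There is essentially no obstacle. The only point to check is that the topology in the statement coincides with the one used in Proposition \ref{prop-useful}. It does, because both are defined as the topology induced by the norm of the Fourier algebra. No separate continuity argument is needed, since $\Theta_G$, $\Theta_H$ and $\theta_*$ are isometries.
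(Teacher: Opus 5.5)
Your proposal is correct and follows the paper's own argument. The paper also applies Proposition \ref{prop-useful} to the given $*$-isomorphism and then uses the identity \eqref{compact-case}, which comes from Eymard's equality $\B(G)=\A(G)$ for compact groups, to replace $\A(G)\cap\P_1(G)$ by $\P_1(G)$ and likewise for $H$, with the same norm topology.
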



\section{Convex structure of \texorpdfstring{$\P_1(G)$}{P1(G)} and structure of the group von Neumann algebra}
\label{sec-compact}

\paragraph{Topology on $\P_1(G)$.}
Assume that the group $G$ is compact. Then every function $\varphi \in \P_1(G)$ belongs to the Fourier algebra $\A(G)$ and satisfies $\norm{\varphi}_{\A(G)}=\varphi(e)=1$. Hence $\P_1(G)=\A(G)\cap \P_1(G)$ and by Proposition \ref{prop-first}, the convex set $\P_1(G)$ identifies affinely with the normal state space of the von Neumann algebra $\VN(G)$. We equip the convex set $\P_1(G)$ with the induced topology coming from the norm topology of $\A(G)$ (equivalently, the norm topology of the predual $\VN(G)_*$). When we speak of closed faces of $\P_1(G)$ in the sequel, we mean closed for this topology.

\paragraph{Facial structure of $\P_1(G)$ and von Neumann algebra $\VN(G)$}
Recall that if the group $G$ is compact, there exists a $*$-isomorphism
\begin{equation}
\label{VNG-as-sum-of-matricial-algebras}
\VN(G)
\cong \bigoplus_{\pi \in \Irr(G)} \M_{d_\pi},
\end{equation} 
where $\Irr(G)$ is the set of all equivalence classes of irreducible unitary representations.  See, e.g., \cite[Theorem 8.3.11 p.~53]{Rau17}. Here $d_\pi$ is the degree of $\pi \in \Irr(G)$ and $\bigoplus_{\pi \in \Irr(G)} \M_{d_\pi}$ is defined as the set $\{(x_{\pi})_{\pi \in \Irr(G)} : x_\pi \in \M_{d_\pi}, \sup_{\pi \in \Irr(G)} \norm{x_{\pi}} < \infty \}$ equipped with the canonical involution and product. We start with the following result.

\begin{lemma}
\label{lemma-maximal}
Let $G$ be a compact group. There exists a maximal family $(F_i)_{i \in I}$ of mutually disjoint minimal split nonempty closed faces of the convex set $\P_1(G)$.
\end{lemma}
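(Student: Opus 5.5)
We identify $\P_1(G)$ with the normal state space $\cal{K}$ of $\VN(G)$ via Proposition \ref{prop-first} and \eqref{compact-case}. Under this identification, closed faces of $\P_1(G)$ correspond to closed faces of $\cal{K}$. By Proposition \ref{prop-Alfsen}, the closed split faces of $\cal{K}$ are exactly the sets $\Face(z)$ with $z$ a central projection of $\VN(G)$. Since $\Phi$ is an order isomorphism and $\Face(0)=\emptyset$, the minimal nonempty closed split faces are exactly the sets $\Face(z)$ with $z$ a minimal projection of the center $\Z(\VN(G))$.

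Next we use the decomposition \eqref{VNG-as-sum-of-matricial-algebras}. The center of $\bigoplus_{\pi \in \Irr(G)} \M_{d_\pi}$ is $\ell^\infty_{\Irr(G)}$, embedded as scalar blocks. By Example \ref{example-minimal-2}, its minimal projections are the central projections $z_\pi$, namely the identity in the block $\pi$ and zero elsewhere, for $\pi \in \Irr(G)$. We then set $F_\pi \ov{\mathrm{def}}{=} \Face(z_\pi)$, viewed in $\P_1(G)$.

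To show these faces are mutually disjoint, note that for $\pi \neq \pi'$ we have $z_\pi \wedge z_{\pi'} = z_\pi z_{\pi'}=0$. Hence, by \eqref{disjoint},
\[
F_\pi \cap F_{\pi'}=\Face(0)=\emptyset.
\]
So $(F_\pi)_{\pi \in \Irr(G)}$ is a family of mutually disjoint minimal split nonempty closed faces.

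It remains to prove maximality. Any minimal split nonempty closed face $F$ is of the form $\Face(z)$ with $z$ minimal central, so $z=z_\pi$ for some $\pi$, and $F$ already belongs to the family. Thus the family cannot be enlarged. In fact it is the family of all such faces, so it is trivially maximal and no Zorn argument is needed.

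The only delicate step is the characterization of the minimal projections of the center. Via \eqref{VNG-as-sum-of-matricial-algebras}, this amounts to checking that $\Z\bigl(\bigoplus_\pi \M_{d_\pi}\bigr)=\{(c_\pi I_{d_\pi})_\pi : (c_\pi)_\pi \in \ell^\infty_{\Irr(G)}\}$, which follows blockwise from $\Z(\M_{d})=\mathbb{C}I_d$. We must also make sure the correspondence of Proposition \ref{prop-Alfsen} transports through the affine homeomorphism $\Theta_G$. This holds because an affine homeomorphism preserves closed faces, split faces, inclusion and intersections, as in Lemma \ref{lem-affine-homeomorphism-preserves-face-chains}.
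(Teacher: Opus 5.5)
Your proof is correct, but it takes a different route from the paper.

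The paper applies Zorn's lemma to the poset $\mathcal{S}$ of families of mutually disjoint minimal split nonempty closed faces. There, the description $\Z(\VN(G)) \cong \ell^\infty_{\Irr(G)}$ is used only to produce one element $\{\Face(p)\}$ of $\mathcal{S}$, and unions of chains serve as upper bounds.

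You instead identify \emph{all} minimal split nonempty closed faces as the sets $\Face(z_\pi)$, $\pi \in \Irr(G)$, and check that they are pairwise disjoint. Their collection is then the largest element of $\mathcal{S}$, so no choice principle is needed.

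Your route gives more than existence:
\begin{enumerate}
\item The maximal family is unique. It is also described purely convex-geometrically, as the set of all minimal split nonempty closed faces, so any affine homeomorphism visibly carries it onto its counterpart.
\item It makes the later identification of the $p_i$ with the $e_\pi$ immediate, together with $\sum_i p_i = 1$. The paper has to re-derive these facts in the proof of Proposition \ref{prop:main-vNG}, through the contradiction argument with $r=1-p$ and the analysis of the products $e_\pi p_i$.
\end{enumerate}

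The disjointness step does not even need the block decomposition. If $z \neq w$ are minimal central projections, then $zw$ is a central subprojection of both. If $zw \neq 0$, minimality would give $z = zw = w$, so $zw=0$. Hence in any von Neumann algebra the set of all minimal split nonempty closed faces is already the unique maximal family, and Zorn's lemma is superfluous. The compact-group structure matters only for identifying that family explicitly.

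The paper's argument uses less information at this particular step. That economy is not real, though, since the full description of the center is needed immediately afterwards anyway.
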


\begin{proof}
We justify the existence of the family by Zorn's lemma, which states that if every totally ordered subset in a nonempty partially ordered set $P$ has an upper bound in $P$, then $P$ admits a maximal element. Let $\cal{S}$ be the set of all families $\cal{E}$ of mutually disjoint minimal split nonempty closed faces of $\P_1(G)$, partially ordered by inclusion.

Observe that the center of the matrix algebra $\M_{d_\pi}$ is $\mathbb{C}1$ by \cite[Example 5.1.7 p.~309]{KaR97a}. Using \cite[p.~21]{Dix81} in the second equality, we deduce that the center of $\VN(G)$ is  
\begin{equation}
\label{center}
\Z(\VN(G)) 
\ov{\eqref{VNG-as-sum-of-matricial-algebras}}{\cong} \Z \bigg(\bigoplus_{\pi \in \Irr(G)}\M_{d_\pi}\bigg)
=\bigoplus_{\pi \in \Irr(G)} \Z(\M_{d_\pi})
\cong \ell^\infty_{\Irr(G)}.
\end{equation}
Consequently, this center contains minimal projections, where minimality is understood relative to $\Z(\VN(G))$ rather than to $\VN(G)$. 

Fix a minimal projection $p \in \Z(\VN(G))$. By Proposition \ref{prop-Alfsen}, the set $\Face(p)$ is a split nonempty closed face of the convex set $\P_1(G)$, and it is minimal among split nonempty closed faces since $p$ is minimal among non-zero central projections, using the order preserving property \eqref{order} in Proposition \ref{prop-Alfsen}. Indeed, if $F$ is a split nonempty closed face with $F \subset \Face(p)$, then $F=\Face(q)$ for some central projection $q$ and $\Face(q)\subset \Face(p)$ implies $q \leq p$ by \eqref{order}. Since $p$ is minimal in $\Z(\VN(G))$, we have $q=0$ or $q=p$. As $F$ is nonempty, $q \neq 0$. Hence $q=p$ and $F=\Face(p)$. Therefore $\{\Face(p)\} \in \cal{S}$. So the set $\cal{S}$ is nonempty.

Now, let $(\cal{E}_j)_{j \in J}$ be a totally ordered subset of $\cal{S}$. Set
\[
\cal{E} \ov{\mathrm{def}}{=} \bigcup_{j \in J} \cal{E}_j.
\]
Then $\cal{E}$ is again a family of mutually disjoint minimal split nonempty closed faces (disjointness and minimality are preserved because the chain is totally ordered by inclusion). Hence $\cal{E}$ is an upper bound of $(\cal{E}_j)_{j \in J}$ in $\cal{S}$. By Zorn's lemma, $\cal{S}$ admits a maximal element, that is, a maximal family $(F_i)_{i \in I}$ of mutually disjoint minimal split nonempty closed faces of the set $\P_1(G)$.
\end{proof}

\begin{prop}
\label{prop:main-vNG}
Let $G$ be a compact group. Let $\P_1(G)$ be equipped with the topology induced by the norm
topology of $\A(G)$. The class of $\P_1(G)$ up to affine homeomorphisms determines the von Neumann algebra $\VN(G)$ up to $*$-isomorphism.
\end{prop}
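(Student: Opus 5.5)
The plan is to recover, purely from the convex geometry of $\P_1(G)$, the data $(d_\pi)_{\pi \in \Irr(G)}$ up to a bijection of index sets. By \eqref{VNG-as-sum-of-matricial-algebras}, this multiset of degrees determines $\VN(G)$ up to $*$-isomorphism. Concretely, I will show the following: if $T\co \P_1(G)\to \P_1(H)$ is an affine homeomorphism, then there is a bijection $\sigma\co \Irr(G)\to\Irr(H)$ with $d_{\sigma(\pi)}=d_\pi$, hence
\[
\VN(G)\cong\bigoplus_{\pi}\M_{d_\pi}\cong\bigoplus_{\rho}\M_{d_\rho}\cong \VN(H).
\]

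\textbf{Step 1: minimal split faces are indexed by $\Irr(G)$.} Identify $\P_1(G)$ with the normal state space via Proposition \ref{prop-first}. Proposition \ref{prop-Alfsen} says split closed faces correspond exactly to central projections. By \eqref{center}, the center is $\ell^\infty_{\Irr(G)}$, whose minimal projections are the $z_\pi$ (Example \ref{example-minimal-2}), namely the units of the blocks $\M_{d_\pi}$. So, as in the proof of Lemma \ref{lemma-maximal}, the minimal nonempty split closed faces are exactly $F_\pi=\Face(z_\pi)$, $\pi\in\Irr(G)$. These are automatically pairwise disjoint: $z_\pi\wedge z_{\pi'}=0$, so \eqref{disjoint} gives $F_\pi\cap F_{\pi'}=\Face(0)=\emptyset$. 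Hence the maximal family of Lemma \ref{lemma-maximal} is exactly $\{F_\pi\}_{\pi \in \Irr(G)}$. Since every minimal split face lies in it, it is intrinsically determined by the convex structure.

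\textbf{Step 2: transport by $T$.} By Lemma \ref{lem-affine-homeomorphism-preserves-face-chains}, $T$ maps closed faces to closed faces bijectively and order-preservingly, and maps split faces to split faces (and $T^{-1}$ does too). So $T$ maps minimal nonempty split closed faces of $\P_1(G)$ bijectively onto those of $\P_1(H)$. This yields a bijection $\sigma\co \Irr(G)\to\Irr(H)$ with $T(F_\pi)=F_{\sigma(\pi)}$.

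\textbf{Step 3: recover $d_\pi$ from $F_\pi$.} This is the main point. The face $F_\pi$ is the set of normal states supported on $z_\pi$. Its closed subfaces correspond, via Proposition \ref{prop-Alfsen} restricted below $z_\pi$, to projections $q\le z_\pi$, i.e.\ to projections of $\M_{d_\pi}$, with order preserved. A strictly increasing chain of nonempty closed faces inside $F_\pi$ therefore corresponds to a strictly increasing chain of nonzero projections in $\M_{d_\pi}$. Ranks strictly increase along such a chain, so the chain has length at most $d_\pi$. Conversely, the chain $e_{11}\le e_{11}+e_{22}\le\dots\le 1$ is a maximal chain of length exactly $d_\pi$. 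Thus $d_\pi$ equals the maximal cardinality of a strictly increasing chain of nonempty closed faces contained in $F_\pi$. Lemma \ref{lem-affine-homeomorphism-preserves-face-chains} says this cardinality is preserved by $T$, so $d_{\sigma(\pi)}=d_\pi$.

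The main obstacle is Step 3: we must check that the closed faces of $\P_1(G)$ contained in $\Face(z_\pi)$ are exactly the $\Face(q)$ with $q\le z_\pi$. This follows from the bijection and the order property \eqref{order} in Proposition \ref{prop-Alfsen}, applied in $\VN(G)$ itself, together with $\Face(0)=\emptyset$ to exclude $q=0$. One must also check that the rank argument gives the chain length exactly. Finite dimensionality of $\M_{d_\pi}$ makes that last part routine.
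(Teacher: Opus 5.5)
Your proposal is correct and follows essentially the paper's argument. Closed split faces correspond to central projections via Proposition \ref{prop-Alfsen}, and the minimal ones correspond to the block units $z_\pi$; $d_\pi$ is then the maximal length of a chain of nonempty closed faces in $\Face(z_\pi)$ by a rank count, and all of this is transported by Lemma \ref{lem-affine-homeomorphism-preserves-face-chains}. The one difference is a streamlining: you observe directly that every minimal split closed face is some $\Face(z_\pi)$ and that these are automatically pairwise disjoint, which makes the paper's Zorn's lemma construction and its proof that $\sum_i p_i=1$ unnecessary. You also make explicit the bijection $\sigma\colon \Irr(G)\to\Irr(H)$ with $d_{\sigma(\pi)}=d_\pi$, which the paper leaves implicit.
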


\begin{proof}
By Lemma \ref{lemma-maximal}, there exists a maximal family $(F_i)_{i \in I}$ of mutually disjoint minimal split nonempty closed faces of the convex set $\P_1(G)$. 
By Proposition \ref{prop-Alfsen}, for each $i \in I$, there exists a unique central projection $p_i \in \VN(G)$, which is minimal in the center $\Z(\VN(G))$, such that
\begin{equation}
\label{inter-final}
F_i
=\Face(p_i)
\ov{\eqref{def-Face-p}}{=} \bigl\{\varphi\in\P_1(G): \Theta_G(\varphi)(p_i)=1 \bigr\}.
\end{equation}
Moreover, if $i \neq j$, the faces $F_i$ and $F_j$ are disjoint, i.e., $F_i \cap F_j =\emptyset$. Since $p_i$ and $p_j$ are central, they commute, and consequently $p_ip_j=p_i \wedge p_j$, according to \cite[Proposition 2.5.3 p.~111]{KaR97a}. Using Proposition \ref{prop-Alfsen}, we obtain
$$
\Face(p_i p_j)
=\Face(p_i \wedge p_j)
\ov{\eqref{disjoint}}{=} \Face(p_i) \cap \Face(p_j)
=\emptyset.
$$ 
Using again Proposition \ref{prop-Alfsen}, we conclude that $p_i p_j=0$. Thus $(p_i)_{i \in I}$ is a family of pairwise orthogonal minimal central projections.

Let $p \ov{\mathrm{def}}{=} \sum_{i \in I} p_i$, where the sum is taken in the strong operator topology. This sum exists because the projections $(p_i)_{i \in I}$ are pairwise orthogonal, see Proposition \ref{prop-sum-projections}. Since the center of a von Neumann algebra is strongly closed and each $p_i$ is central, the strong sum $p$ is a central projection. 

We claim that $p=1$. Assume by contradiction that $p \neq 1$ and set $r \ov{\mathrm{def}}{=} 1-p$. Then $r$ is a non-zero central projection. By the decomposition $
\VN(G)
\cong
\bigoplus_{\pi \in \Irr(G)} \M_{d_\pi}$ and Proposition \ref{prop-carac-atomic}, the von Neumann algebra $\VN(G)$ is atomic, and so is its center, again by Proposition \ref{prop-carac-atomic}. Hence $r$ dominates a minimal central projection $q \leq r$. By Proposition \ref{prop-Alfsen}, the face $\Face(q)$ is a nonempty split face, and it is minimal among nonempty split faces because $q$ is minimal and central. Furthermore, $q \leq r = 1-p$ implies $q p_i =0$ for all $i \in I$. Hence $\Face(q)$ is disjoint from each face $F_i$ by Proposition \ref{prop-Alfsen}. Therefore, the family $(F_i)_{i \in I} \cup \{\Face(q)\}$ is a family of mutually disjoint minimal split nonempty closed faces, contradicting the maximality of $(F_i)_{i \in I}$. Thus $r=0$, that is, $p=1$. In other words, we have
\begin{equation}
\label{sum=1}
\sum_{i \in I} p_i 
= 1,
\end{equation}
with strong convergence.

By Example \ref{example-minimal-2}, the minimal projections of the center $\Z(\VN(G))$, described in \eqref{center}, identify exactly the elements $e_\pi$ of the canonical basis of $\ell^\infty_{\Irr(G)}$ for $\pi \in \Irr(G)$. Since it is a sum of matrix algebras, the von Neumann algebra $\VN(G)$ is of type $\I$ by \cite[Proposition 6.7.2 p.~303]{Li92} or Proposition \ref{prop-carac-atomic}. We deduce by \cite[p.~422]{KaR97b} that each reduced von Neumann algebra $\VN(G)e_\pi$ is also of type $\I$. Furthermore, since $e_\pi$ is minimal in $\Z(\VN(G))$ the von Neumann algebra $\VN(G)e_\pi$ is a factor by \cite[p.~420]{KaR97b}.


Recall that by \cite[Theorem 6.6.1 p.~426]{KaR97b} a type $\I$ factor is $*$-isomorphic to $\B(\cal{H})$ for some Hilbert space $\cal{H}$ with dimension $\dim \cal{H}$ belonging to $\{1,2,\ldots\} \cup \{\text{infinite cardinals}\}$. So we have a $*$-isomorphism $\VN(G)e_\pi \cong \B(\cal{H}_\pi)$ with $\dim \cal{H}_\pi=d_\pi$. Since we have the (unique) decomposition \eqref{VNG-as-sum-of-matricial-algebras} into factors, every factor summand of $\VN(G)$ is finite-dimensional. 

Since $p_i$ is a non-zero central projection which is minimal in $\Z(\VN(G))$, there exists a unique $\pi_i \in \Irr(G)$ such that $p_i=e_{\pi_i}$. This implies that $\VN(G)p_i=\VN(G)e_{\pi_i} \cong \M_{d_{\pi_i}}$. We let $n_i \ov{\mathrm{def}}{=} d_{\pi_i}$. Fix, with \cite[Corollary 6.5.3 p.~424]{KaR97b}, a maximal family $(p_{i,1},\ldots,p_{i,n_i})$ of mutually orthogonal minimal projections in $\VN(G)p_i$ such that
\[
p_i
=\sum_{k=1}^{n_i} p_{i,k},
\]
where the sum is taken in the strong operator topology. For any integer $k \in \{1,\ldots, n_i\}$, set
\[
q_{i,k} 
\ov{\mathrm{def}}{=} p_{i,1}+\cdots+p_{i,k}.
\]
Then $(q_{i,k})_{k=1}^{n_i}$ is a strictly increasing family of projections in $\VN(G)p_i$, and by Proposition \ref{prop-Alfsen} we obtain a strictly increasing family of faces
\begin{equation}
\label{eq:facechain}
\Face(q_{i,1}) \subsetneq \Face(q_{i,2}) \subsetneq \cdots \subsetneq \Face(q_{i,n_i})
= \Face(p_i)
\ov{\eqref{inter-final}}{=} F_i.
\end{equation}
We claim that \eqref{eq:facechain} is maximal, and that $n_i$ is the maximal cardinality of a strictly increasing chain of nonempty norm-closed faces contained in $F_i$. Indeed, if
\[
E_1 \subsetneq \cdots \subsetneq E_m \subseteq F_i
\]
is a strictly increasing chain of nonempty norm-closed faces, then, by Proposition \ref{prop-Alfsen}, there exist non-zero projections
\[
0< f_1<\cdots<f_m\leq p_i
\]
such that $E_k=\Face(f_k)$ for any $1 \leq k \leq m$. Under a $*$-isomorphism $\VN(G)p_i \cong \M_{n_i}$, projections correspond to orthogonal projections onto subspaces of $\mathbb{C}^{n_i}$, hence each projection has a rank in $\{0,1,\ldots,n_i\}$. Then
\[
1 
\leq \rank(f_1) < \rank(f_2) < \cdots < \rank(f_m) \leq n_i,
\]
so necessarily $m \leq n_i$. Since the chain in \eqref{eq:facechain} has cardinality $n_i$, it is maximal. Consequently, the integer $n_i$ can be recovered from the facial structure of $F_i$ and is invariant under affine homeomorphism, by Lemma \ref{lem-affine-homeomorphism-preserves-face-chains}.




Let $\pi \in \Irr(G)$. Since $\sum_{i \in I} p_i \ov{\eqref{sum=1}}{=} 1$ strongly and since left multiplication by the fixed bounded operator $e_\pi$ is strongly continuous on bounded sets by \cite[Proposition 1.2.1 p.~9]{Li92}, we obtain
\begin{equation}
\label{fin-2345}
e_\pi
=\sum_{i \in I} e_\pi p_i
\end{equation}
in the strong operator topology. Now, each $e_\pi p_i$ is a projection by \cite[Proposition 2.5.3 p.~111]{KaR97a}, which is central and dominated by $e_\pi$. As $e_\pi$ is minimal in $\Z(\VN(G))$, for every $i$, we have $e_\pi p_i=0$ or $e_\pi p_i=e_\pi$. By \eqref{fin-2345}, since $e_\pi \neq 0$ and the projections $(p_i)_{i \in I}$ are pairwise orthogonal, there exists a unique $i \in I$ such that $e_\pi p_i=e_\pi$. Hence $e_\pi \leq p_i$, and since $p_i$ is also minimal central, we obtain $e_\pi=p_i$. Therefore the family $(p_i)_{i \in I}$ coincides with the family $(e_\pi)_{\pi \in \Irr(G)}$ up to reindexing. In particular, the map $i \mapsto \pi_i$ is a bijection between $I$ and $\Irr(G)$.
Thus, knowing the facial structure of $\P_1(G)$ allows us to recover the decomposition \eqref{VNG-as-sum-of-matricial-algebras} up to $*$-isomorphism.
\end{proof}

\begin{cor}
\label{cor-abelian-gp}
Let $G$ and $H$ be compact groups. Suppose that the convex sets $\P_1(G)$ and $\P_1(H)$ are affinely homeomorphic, where we use the topologies induced by the norm topologies of $\A(G)$ and $\A(H)$. Then the von Neumann algebras $\VN(G)$ and $\VN(H)$ are $*$-isomorphic.
\end{cor}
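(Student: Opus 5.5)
Let $T \co \P_1(G) \to \P_1(H)$ be an affine homeomorphism. The plan is to carry the invariants extracted in the proof of Proposition \ref{prop:main-vNG} across $T$ and match them. Apply Lemma \ref{lemma-maximal} to $G$ to get a maximal family $(F_i)_{i \in I}$ of mutually disjoint minimal split nonempty closed faces of $\P_1(G)$. Then show that $(T(F_i))_{i \in I}$ is a family of the same kind in $\P_1(H)$, and that it is maximal.

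\emph{Transporting the faces.} By Lemma \ref{lem-affine-homeomorphism-preserves-face-chains}, $T$ induces an order isomorphism between the norm-closed faces of $\P_1(G)$ and those of $\P_1(H)$, and it sends split faces to split faces. Applying the lemma to $T^{-1}$ gives the converse. So $T$ restricts to an order isomorphism between nonempty closed split faces, and hence preserves minimality among them. Since $T$ is a bijection, it preserves disjointness. Maximality of the family follows by applying the same reasoning to $T^{-1}$: if some minimal split face $F'$ of $\P_1(H)$ were disjoint from every $T(F_i)$, then $T^{-1}(F')$ could be added to $(F_i)_{i \in I}$, a contradiction.

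\emph{Reading off the algebra.} By the proof of Proposition \ref{prop:main-vNG}, applied to $G$, we have $F_i = \Face(e_{\pi_i})$, and $i \mapsto \pi_i$ is a bijection $I \to \Irr(G)$. Moreover $d_{\pi_i}$ equals the maximal cardinality $n_i$ of a strictly increasing chain of nonempty closed faces contained in $F_i$. Applied to $H$ with the maximal family $(T(F_i))_{i\in I}$, the same proof gives a bijection $i \mapsto \sigma_i$ from $I$ onto $\Irr(H)$, where $d_{\sigma_i}$ is the maximal length of such a chain inside $T(F_i)$. By the chain statement of Lemma \ref{lem-affine-homeomorphism-preserves-face-chains}, these maximal lengths coincide, so $d_{\sigma_i} = d_{\pi_i}$. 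Composing the two bijections gives a bijection $\beta \co \Irr(G) \to \Irr(H)$, $\pi_i \mapsto \sigma_i$, that preserves degrees.

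\emph{Conclusion.} Using the decomposition \eqref{VNG-as-sum-of-matricial-algebras} for both groups, define the $*$-isomorphism $\bigoplus_{\pi \in \Irr(G)} \M_{d_\pi} \to \bigoplus_{\sigma \in \Irr(H)} \M_{d_\sigma}$ by $(x_\pi)_\pi \mapsto (x_{\beta^{-1}(\sigma)})_\sigma$. This map is well defined because $d_{\beta^{-1}(\sigma)} = d_\sigma$. It preserves the sup norm and is bijective, multiplicative and $*$-preserving. Composing it with the isomorphisms in \eqref{VNG-as-sum-of-matricial-algebras} gives $\VN(G) \cong \VN(H)$.

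The main obstacle is the transport step. Lemma \ref{lem-affine-homeomorphism-preserves-face-chains} only states that split faces go to split faces, so one has to check two things: that split faces of $\P_1(H)$ pull back to split faces via $T^{-1}$, and that minimality is relative to the class of nonempty closed split faces. The other delicate point is that the proof of Proposition \ref{prop:main-vNG} yields invariants for an arbitrary maximal family, not only for one specific family. That is why it applies to the transported family $(T(F_i))_{i\in I}$ in $\P_1(H)$.
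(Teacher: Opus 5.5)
Your proposal is correct and follows the paper's route. The paper proves this corollary simply by invoking Proposition~\ref{prop:main-vNG}, whose proof reads off $\Irr(G)$ and the degrees $d_\pi$ from a maximal family of disjoint minimal split closed faces and from the maximal lengths of face chains inside them, exactly as you do; you additionally make explicit the transport of such a maximal family across $T$ (using Lemma~\ref{lem-affine-homeomorphism-preserves-face-chains} for both $T$ and $T^{-1}$), which the paper leaves implicit in the phrase ``invariant under affine homeomorphism''.
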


\begin{proof}
It suffices to use Proposition \ref{prop:main-vNG}.
\end{proof}

A combination of Corollary \ref{cor-abelian-gp} and Proposition \ref{prop:compact-converse} yields the following characterization.

\begin{thm}
\label{main}
Let $G$ and $H$ be compact groups. The von Neumann algebras $\VN(G)$ and $\VN(H)$ are $*$-isomorphic if and only if the convex sets $\P_1(G)$ and $\P_1(H)$ are affinely homeomorphic, where we use the topologies induced by the norm topologies of $\A(G)$ and $\A(H)$.
\end{thm}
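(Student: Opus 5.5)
The theorem is the conjunction of two implications already prepared in the paper, so the plan is simply to assemble them. For the direct implication, assume a $*$-isomorphism $\theta \co \VN(G) \to \VN(H)$. Since $G$ and $H$ are compact, \eqref{compact-case} gives $\A(G)\cap\P_1(G)=\P_1(G)$ and likewise for $H$. Proposition \ref{prop:compact-converse} (itself obtained from Proposition \ref{prop-useful} via the preadjoint $\theta_*$ and Eymard's identification $\Theta_G$, $\Theta_H$ of Proposition \ref{prop-first}) then yields the affine homeomorphism $\Theta_G^{-1}\circ\theta_*\circ\Theta_H \co \P_1(H)\to\P_1(G)$ for the $\A$-norm topologies. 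Its inverse goes the other way.

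For the converse, assume $T \co \P_1(G)\to\P_1(H)$ is an affine homeomorphism and invoke Corollary \ref{cor-abelian-gp}, i.e.\ Proposition \ref{prop:main-vNG}. To make the conclusion explicit, I would carry out the following steps.

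First, choose by Lemma \ref{lemma-maximal} a maximal family $(F_i)_{i\in I}$ of mutually disjoint minimal split nonempty closed faces of $\P_1(G)$.

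Second, apply Lemma \ref{lem-affine-homeomorphism-preserves-face-chains}. Since $T$ preserves closed faces, order, disjointness (by injectivity) and split faces, $(T(F_i))_{i\in I}$ is again a maximal such family in $\P_1(H)$. Maximality transfers by applying the same reasoning to $T^{-1}$.

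Third, the proof of Proposition \ref{prop:main-vNG} identifies $I$ with $\Irr(G)$ via $F_i=\Face(e_{\pi_i})$. It also recovers $d_{\pi_i}$ as the maximal length of a strictly increasing chain of nonempty closed faces inside $F_i$. The same holds for $H$ with the family $(T(F_i))$. Since $T$ preserves these chain lengths, we obtain a bijection $\sigma\co\Irr(G)\to\Irr(H)$ with $d_{\sigma(\pi)}=d_\pi$.

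Fourth, \eqref{VNG-as-sum-of-matricial-algebras} then gives
\[
\VN(G)\cong\bigoplus_{\pi\in\Irr(G)}\M_{d_\pi}\cong\bigoplus_{\rho\in\Irr(H)}\M_{d_\rho}\cong\VN(H),
\]
where the middle isomorphism permutes the summands along $\sigma$ and acts as the identity on each block $\M_{d_\pi}=\M_{d_{\sigma(\pi)}}$. This map is clearly a $*$-isomorphism of $\ell^\infty$-direct sums, since norms are preserved blockwise and so uniform boundedness is preserved.

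The only delicate point, and hence the main obstacle, is the third step. One must check that the matching $i\mapsto\pi_i$ produced in Proposition \ref{prop:main-vNG} is a genuine bijection onto $\Irr(G)$; this is shown at the end of that proof. One must also check that the chain-length invariant is computed from the face $F_i$ alone, so that it transfers to $T(F_i)$. Both points are handled by the earlier results, so the theorem follows.
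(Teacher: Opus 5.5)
Your proposal is correct and follows the paper's own route. The forward implication is Proposition~\ref{prop:compact-converse}, and the converse is Corollary~\ref{cor-abelian-gp}, i.e.\ Proposition~\ref{prop:main-vNG}; your explicit transport of the maximal family $(F_i)_{i\in I}$ through $T$ (via Lemma~\ref{lem-affine-homeomorphism-preserves-face-chains} applied to $T$ and $T^{-1}$) to a degree-preserving bijection $\sigma\colon\Irr(G)\to\Irr(H)$ simply spells out what the paper leaves implicit at the end of that proof.
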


\begin{remark} \normalfont
Let $G$ be a locally compact group. Recall that by \cite[Theorem C.5.2 p.~358]{BHV08}, a normalized positive definite function $\varphi$ is an extreme point of the convex set \texorpdfstring{$\P_1(G)$}{P1(G)} if and only if the associated cyclic GNS unitary representation $\pi_\varphi$ is irreducible. 
\end{remark}

\section{Affine homeomorphisms of the set $\P_1(G)$ of normalized positive definite functions}
\label{sec-affine-homeo}

Here, we describe the group of affine homeomorphisms of the set $\P_1(G)$ of normalized positive definite functions when the group $G$ is compact. 

\paragraph{Jordan $*$-homomorphisms} A Jordan $*$-homomorphism \cite[p.~8]{Sto13} between von Neumann algebras $\cal{M}$ and $\cal{N}$ is a linear map $J \co \cal{M} \to \cal{N}$ such that
$$
J(x^*) =J(x)^*
\quad \text{and} \quad 
J(xy+yx)
=J(x)J(y)+J(y)J(x), \quad x,y \in \cal{M}.
$$
This means in particular that $J$ preserves the Jordan product $x \circ y \ov{\mathrm{def}}{=} \frac{1}{2}(xy+yx)$. It is worth noting that $*$-homomorphisms and $*$-anti-homomorphisms are Jordan $*$-homomorphisms. Furthermore, according to \cite[Result 1 p.~156]{Emch09} a Jordan $*$-automorphism $J \co \cal{M} \to \cal{M}$ is necessarily normal. Moreover, by \cite[Theorem 3.2.4 p.~32]{Sto13} (see also \cite[Exercise 10.5.27 p.~777]{KaR97b} for a more general result), a Jordan $*$-automorphism $J \co \B(H)\to \B(H)$ of $\B(H)$, where $H$ is a complex Hilbert space, is either a $*$-automorphism or a $*$-anti-automorphism of $\B(H)$. 
More explicitly, by \cite[Theorem 3.2.4 p.~32]{Sto13} (see also \cite[p.~145]{Lan17} for a different presentation), there exists a unitary $u \in \B(H)$ such that, in the first case,
\begin{equation}
\label{unit-1}
J(x)
=uxu^*,
\quad x \in \B(H),
\end{equation}
whereas, in the second case,
\begin{equation}
\label{unit-2}
J(x)
=u x^\top u^*,
\quad x \in \B(H),
\end{equation}
where $\top$ denotes the transpose operation with respect to a fixed orthonormal basis of $H$.


\paragraph{The group of affine homeomorphisms of $\P_1(G)$}

If $G$ is a locally compact group, recall that we have an isometric isomorphism $\Theta_G \co \A(G) \to \VN(G)_*$ of Banach spaces.

\begin{prop}
\label{prop:affhomeo-P1-compact}
Let $G$ be a compact group. Equip the convex set $\P_1(G)$ with the topology induced by the norm topology of $\A(G)$. Let $T \co \P_1(G) \to \P_1(G)$ be an affine homeomorphism. Then there exists a unique Jordan $*$-automorphism $\theta \co \VN(G) \to \VN(G)$ such that
\begin{equation}
\label{eq:T-by-theta-lambda}
(T\varphi)(s)
=
\Theta_G(\varphi)\big(\theta(\lambda_s)\big), \quad \varphi \in \P_1(G),s \in G.
\end{equation}
Moreover, if we write the decomposition $\VN(G)\cong \bigoplus_{\pi \in \Irr(G)} \M_{d_\pi}$ and denote by $p_\pi$ the minimal central projection supporting the summand $\M_{d_\pi}$, then there exists a permutation $\sigma$ of $\Irr(G)$ such that $d_{\sigma(\pi)}=d_\pi$ for all $\pi \in \Irr(G)$, and for each $\pi$ there exists a unitary $u_\pi \in \M_{d_\pi}$ such that the restriction of $\theta$ to $\VN(G)p_\pi \cong \M_{d_\pi}$ is either
\[
x \mapsto u_\pi x u_\pi^*,
\quad \text{or} \quad
x \mapsto u_\pi x^{\top} u_\pi^*,
\quad x \in \M_{d_\pi},
\]
after using the identification $\VN(G)p_{\sigma(\pi)} \cong \M_{d_\pi}$. Conversely, any Jordan $*$-automorphism $\theta \co \VN(G) \to \VN(G)$ gives rise, through the construction \eqref{eq:T-by-theta-lambda}, to an affine homeomorphism of $\P_1(G)$.
\end{prop}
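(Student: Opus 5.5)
The plan is to transport $T$ to the normal state space and apply Kadison's theorem on affine bijections of state spaces. Set $S \overset{\mathrm{def}}{=} \Theta_G \circ T \circ \Theta_G^{-1}$. By Proposition \ref{prop-first} and the equality $\P_1(G)=\A(G)\cap\P_1(G)$, $S$ is an affine homeomorphism of the normal state space $\cal{K}$ of $\VN(G)$ for the predual norm.

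\textbf{Step 1: extend $S$ to the predual.} Extend $S$ by positive homogeneity to the cone $\VN(G)_*^+$, and then by linearity to the self-adjoint part of the predual, writing $\omega=\omega_+-\omega_-$. The extension is well defined because $S$ is affine. Complexify to obtain a linear bijection $\tilde S$ of $\VN(G)_*$ which preserves the positive cone and the functional $\omega\mapsto\omega(1)$. Since $S$ is a norm homeomorphism of $\cal{K}$, the map $\tilde S$ is a bijective positive map with positive inverse. It is isometric on hermitian functionals, because $\norm{\omega}=\inf\{\omega_1(1)+\omega_2(1): \omega=\omega_1-\omega_2,\ \omega_i \geq 0\}$, and hence bounded on the whole predual.

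\textbf{Step 2: dualize.} Take the adjoint $\theta \overset{\mathrm{def}}{=} (\tilde S)^* \co \VN(G)\to\VN(G)$. It is a weak* continuous, unital, positive linear bijection whose inverse is also positive, i.e. a unital order automorphism. By Kadison's theorem (a unital order isomorphism between C*-algebras is a Jordan $*$-isomorphism; alternatively \cite[Theorem 3.2.4]{Sto13} together with the Alfsen–Shultz theory), $\theta$ is a Jordan $*$-automorphism. Formula \eqref{eq:T-by-theta-lambda} then follows from
\[
(T\varphi)(s)=\Theta_G(T\varphi)(\lambda_s)=(\tilde S\Theta_G(\varphi))(\lambda_s)=\Theta_G(\varphi)(\theta(\lambda_s)).
\]
For uniqueness, two such $\theta$ agree on $\lambda_s$ when tested against all normal states. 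Normal states span the predual, so the two maps agree on $\lambda_s$. By linearity and weak* continuity (Jordan automorphisms are normal, \cite[Result 1]{Emch09}), they then agree on the weak* dense span of $\{\lambda_s : s \in G\}$, hence everywhere.

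\textbf{Step 3: block structure.} A Jordan $*$-automorphism maps the center onto the center: $z$ is central iff $z\circ(x\circ y)=(z\circ x)\circ y$ type identities hold, or more simply via projections. Indeed, $\theta$ preserves projections and orthogonality, and by Proposition \ref{prop-Alfsen} and Lemma \ref{lem-affine-homeomorphism-preserves-face-chains}, $S$ maps split faces to split faces, so $\theta$ permutes central projections. It therefore permutes the minimal central projections $p_\pi$, which defines $\sigma$ through $\theta(p_\pi)=p_{\sigma(\pi)}$. The restriction of $\theta$ is then a Jordan $*$-isomorphism $\M_{d_\pi}\to\M_{d_{\sigma(\pi)}}$. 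Comparing maximal face chains as in the proof of Proposition \ref{prop:main-vNG}, or comparing dimensions, gives $d_{\sigma(\pi)}=d_\pi$. The cited structure theorem (\eqref{unit-1}/\eqref{unit-2}) then yields the unitary $u_\pi$ and the two possible forms of the restriction.

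\textbf{Converse.} Given a Jordan $*$-automorphism $\theta$, it is normal and unital, positive, and has positive inverse. Hence $\theta_*$ maps normal states bijectively onto normal states, affinely and isometrically. Conjugating by $\Theta_G$ gives an affine homeomorphism of $\P_1(G)$ satisfying \eqref{eq:T-by-theta-lambda}.

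\textbf{Main obstacle.} The delicate point is Step 1–2: checking that the extension is bounded, so that the adjoint exists and is weak* continuous, and justifying the appeal to Kadison's order-isomorphism theorem. I would lean on the fact that the hermitian part of the predual is a base-norm space with base $\cal{K}$, so affine bijections of the base extend to isometries automatically.
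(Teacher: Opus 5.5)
Your proposal is correct and follows the paper's proof step for step. The paper obtains $\theta$ in one stroke from \cite[Proposition 5.16 p.~147]{AlS03}, and your Steps 1--2 are essentially the standard proof of that cited result: the base-norm extension of $S$ to an isometry of the predual (which incidentally shows that continuity of $T$ is automatic), dualization, and Kadison's theorem that unital order automorphisms are Jordan $*$-automorphisms. For the permutation of minimal central projections, you use the split-face correspondence of Proposition \ref{prop-Alfsen} and Lemma \ref{lem-affine-homeomorphism-preserves-face-chains}, via the identity $S(\Face(p))=\Face(\theta^{-1}(p))$, which you should state explicitly; the paper cites \cite[Exercise 10.5.22 (viii) p.~774]{KaR97b} instead.

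The only slip is the parenthetical alternative \cite[Theorem 3.2.4]{Sto13}. The paper uses it as the structure theorem for Jordan automorphisms of $\B(H)$, so it cannot produce $\theta$ on $\VN(G)$. Kadison's order-isomorphism theorem is the reference you actually need in Step 2.
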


\begin{proof}
From \eqref{compact-case} and Proposition \ref{prop-first}, the convex set $\P_1(G)$ identifies affinely with the normal state space
\[
\cal{K}
\ov{\mathrm{def}}{=}
\{\omega \in \VN(G)_*^+ : \omega(1)=1\}
\]
via the map $\Theta_G$. In particular, the given topology on $\P_1(G)$ agrees with the norm topology on the subspace $\cal{K}$ of $\VN(G)_*$. Transporting $T \co \P_1(G) \to \P_1(G)$ through $\Theta_G \co \P_1(G) \to \cal{K}$, we obtain an affine homeomorphism
\begin{equation}
\label{final-66}
\widetilde{T}
\ov{\mathrm{def}}{=}
\Theta_G \circ T \circ \Theta_G^{-1}
\co
\cal{K} \to \cal{K}.
\end{equation}
By \cite[Proposition 5.16 p.~147]{AlS03}, there exists a unique Jordan $*$-automorphism $\theta \co \VN(G) \to \VN(G)$  such that $\widetilde{T}=\theta_*|_{\cal{K}}$. This means that
\[
\widetilde{T}(\omega)
=
\omega \circ \theta,
\quad
\omega \in \cal{K}.
\]
Using \eqref{final-66}, we deduce that $(\Theta_G \circ T \circ \Theta_G^{-1})(\omega)=\omega \circ \theta$.  Transporting this identity back through the affine homeomorphism $\Theta_G \co \P_1(G) \to \{\omega \in \VN(G)_*^+ : \omega(1)=1\}$ gives 
\begin{equation}
\label{eq:T-by-theta}
(\Theta_G \circ T)(\varphi)
=
\Theta_G(\varphi)\circ \theta,
\quad
\varphi \in \P_1(G).
\end{equation}
For any $s \in G$, observe that we have $((\Theta_G \circ T)(\varphi))(\lambda_s) \ov{\eqref{Eymard-predual}}{=}(T\varphi)(s)$. So the equality \eqref{eq:T-by-theta} is equivalent to the equality \eqref{eq:T-by-theta-lambda}, i.e.,
\begin{equation*}
(T\varphi)(s)
=
\Theta_G(\varphi)\big(\theta(\lambda_s)\big), \quad \varphi \in \P_1(G),s \in G.
\end{equation*}
By \cite[Exercise 10.5.22 (viii) p.~774]{KaR97b}, it permutes the minimal central projections. Thus there exists a permutation $\sigma$ of $\Irr(G)$ such that $\theta(p_\pi)=p_{\sigma(\pi)}$. Since $p_\pi$ is central, \cite[Exercise 10.5.22 (viii), p.~774]{KaR97b} yields $\theta(xp_\pi) = \theta(x)\theta(p_\pi)$ for any $x \in \VN(G)$.
Hence, we obtain
$$
\theta(\VN(G)p_\pi)
=\theta(\VN(G))\theta(p_\pi)
=\VN(G)p_{\sigma(\pi)}.
$$
Consequently, $\theta$ restricts to a Jordan $*$-isomorphism $
\theta_\pi
\co
\VN(G)p_\pi
\to
\VN(G)p_{\sigma(\pi)}$. Since these algebras are isomorphic to $\M_{d_\pi}$ and $\M_{d_{\sigma(\pi)}}$, respectively, their dimensions coincide, so $d_{\sigma(\pi)}^2 = d_\pi^2$ and hence $d_{\sigma(\pi)}=d_\pi$. Finally, by \cite[Exercise 10.5.27 p.~777]{KaR97b}, a Jordan $*$-isomorphism between full matrix algebras is either a $*$-isomorphism or a $*$-anti-isomorphism. Therefore, after fixing identifications of both blocks with $\M_{d_\pi}$, there exists a unitary $u_\pi$ such that
\[
\theta_\pi(x)=u_\pi xu_\pi^*
\quad
\text{or}
\quad
\theta_\pi(x)
=u_\pi x^\top u_\pi^*.
\]

Conversely, let $\theta \co \VN(G) \to \VN(G)$ be a normal Jordan $*$-automorphism of $\VN(G)$. Since $\theta$ and $\theta^{-1}$ are unital by \cite[Exercise 10.5.22 p.~774]{KaR97b}, positive by \cite[Exercise 10.5.31 p.~778]{KaR97b} and normal by \cite[Result 1 p.~156]{Emch09}, the map $S \co \cal{K} \to \cal{K}$ defined by
$$
S(\omega)
=\omega \circ \theta, \quad \omega \in \cal{K},
$$
is an affine norm homeomorphism, whose inverse is $\cal{K} \to \cal{K}$, $\omega \mapsto \omega \circ \theta^{-1}$. Therefore $
T
\ov{\mathrm{def}}{=}
\Theta_G^{-1} \circ S \circ \Theta_G$ is an affine homeomorphism of $\P_1(G)$ and satisfies \eqref{eq:T-by-theta-lambda}.
\end{proof}


\begin{remark}\normalfont
The group $\mathrm{JAut}(\VN(G))$ of Jordan $*$-automorphisms of the group von Neumann algebra $\VN(G)$ is isomorphic to the group $\mathrm{AffHomeo}(\P_1(G))$ of affine homeomorphisms of $\P_1(G)$. More precisely, the map
$\mathrm{JAut}(\VN(G)) \to \mathrm{AffHomeo}(\P_1(G))$, $\theta \mapsto T_\theta$, where
\[
(T_\theta\varphi)(s)
\ov{\mathrm{def}}{=}
\Theta_G(\varphi)\bigl(\theta^{-1}(\lambda_s)\bigr),
\qquad
\varphi\in\P_1(G),\ s\in G,
\]
is a group isomorphism. The occurrence of $\theta^{-1}$ in this definition is essential for obtaining a group isomorphism rather than an anti-isomorphism. Indeed, precomposition reverses the order of composition, and therefore
\[
T_{\theta_1}\circ T_{\theta_2}=T_{\theta_1\circ\theta_2}, \quad \theta_1,\theta_2\in\mathrm{JAut}(\VN(G)).
\]
Without the inverse, one would instead obtain $T_{\theta_1}\circ T_{\theta_2}=T_{\theta_2\circ\theta_1}$.
\end{remark}

\paragraph{Declaration of interest} None.

\paragraph{Competing interests} The authors declare that they have no competing interests.

\paragraph{Data availability} No datasets were generated during this study.

\paragraph{Acknowledgment} 
Part of this work was carried out during a visit of the first author to Nankai University (China) in 2025, supported by the National Natural Science Foundation of China (Grant No.~12571143). The first author gratefully acknowledges the hospitality of Nankai University.




{\footnotesize

\vspace{0.2cm}

\noindent C\'edric Arhancet\\ 
\noindent 6 rue Didier Daurat, 81000 Albi, France\\
URL: \href{http://sites.google.com/site/cedricarhancet}{https://sites.google.com/site/cedricarhancet}\\
cedric.arhancet@protonmail.com\\
ORCID: 0000-0002-5179-6972 

\vspace{0.2cm}

\noindent Lei Li \\
\noindent School of Mathematical Sciences and LPMC \\
Nankai University, 300071 Tianjin, China \\
leilee@nankai.edu.cn
}

\normalsize


\small
\begin{thebibliography}{79}


\bibitem[AlS01]{AlS01}
E. M. Alfsen and F. W. Shultz.
\newblock State spaces of operator algebras. Basic theory, orientations, and $C^*$-products.
\newblock Math. Theory Appl. Birkh\"auser Boston, Inc., Boston, MA, 2001.

\bibitem[AlS03]{AlS03}
E. M. Alfsen and F. W. Shultz.
\newblock Geometry of state spaces of operator algebras. 
\newblock Mathematics: Theory \& Applications. Birkh\"auser Boston, Inc., Boston, MA, 2003. 

\bibitem[AlS76]{AlS76}
E. Alfsen and F. W. Shultz.
\newblock Non-commutative spectral theory for affine function spaces on convex sets.
\newblock Mem. Amer. Math. Soc. 6 (1976), no. 172.

\bibitem[Arh25]{Arh25}
C. Arhancet.
\newblock Quantum information theory via Fourier multipliers on quantum groups.
\newblock Preprint, arXiv:2008.12019.

\bibitem[BHV08]{BHV08}
B. Bekka, P. de la Harpe, and A. Valette.
\newblock Kazhdan's property (T).
\newblock New Mathematical Monographs, 11. Cambridge University Press, Cambridge, 2008.

\bibitem[Bla06]{Bla06}
B. Blackadar.
\newblock Operator algebras. Theory of $C^*$-algebras and von Neumann algebras.
\newblock Encyclopaedia of Mathematical Sciences, 122. Operator Algebras and Non-commutative Geometry, III. Springer-Verlag, Berlin, 2006.

\bibitem[BrO08]{BrO08}
N. P. Brown and N. Ozawa.
\newblock $C^*$-algebras and finite-dimensional approximations.
\newblock Graduate Studies in Mathematics, 88. American Mathematical Society, Providence, RI, 2008.


\bibitem[Dix77]{Dix77}
J. Dixmier.
\newblock $C^*$-algebras. Translated from the French by Francis Jellett.
\newblock North-Holland Mathematical Library, Vol. 15. North-Holland Publishing Co., Amsterdam-New York-Oxford, 1977. 

\bibitem[Dix81]{Dix81}
J. Dixmier.
\newblock Von Neumann algebras. With a preface by E. C. Lance. Translated from the second French edition by F. Jellett.
\newblock North-Holland Mathematical Library, 27. North-Holland Publishing Co., Amsterdam-New York, 1981.
 
\bibitem[Emch09]{Emch09}
G. G. Emch.
\newblock Algebraic Methods in Statistical Mechanics and Quantum Field Theory.
\newblock Dover Publications, 2009.

\bibitem[Eym64]{Eym64}
P. Eymard.
\newblock L'alg\`ebre de Fourier d'un groupe localement compact (French).
\newblock  Bull. Soc. Math. France 92 (1964), 181--236.

\bibitem[Hel09]{Hel09}
M. Hellmich.
\newblock Decoherence in Infinite Quantum Systems.
\newblock PhD Thesis, Universit\"at Bielefeld, 2009.

\bibitem[KaR97a]{KaR97a}
R. V. Kadison and J. R. Ringrose.
\newblock Fundamentals of the theory of operator algebras. Vol. I. Elementary theory. Reprint of the 1983 original. 
\newblock Graduate Studies in Mathematics, 15. American Mathematical Society, Providence, RI, 1997. 

\bibitem[KaR97b]{KaR97b}
R. V. Kadison and J. R. Ringrose.
\newblock Fundamentals of the theory of operator algebras. Vol. II. Advanced theory. Corrected reprint of the 1986 original.
\newblock Graduate Studies in Mathematics, 16. American Mathematical Society, Providence, RI, 1997.

\bibitem[KaL18]{KaL18}
E. Kaniuth and A. T.-M. Lau.
\newblock Fourier and Fourier-Stieltjes algebras on locally compact groups. 
\newblock Mathematical Surveys and Monographs, 231. American Mathematical Society, Providence, RI, 2018. 

\bibitem[Lan17]{Lan17}%
K. Landsman.
\newblock Foundations of Quantum Theory: From Classical Concepts to Operator Algebras.
\newblock Fundamental Theories of Physics, vol. 188. Springer, Cham, 2017.

\bibitem[LWW22]{LWW22}
L. Li, Y.-S. Wang and Y.-L. Wong.
\newblock How the norm one positive definite functions determine a finite group.
\newblock Linear Algebra Appl. 633 (2022), 71--103.

\bibitem[Li92]{Li92}
B. R. Li.
\newblock Introduction to operator algebras.
\newblock World Scientific Publishing Co., Inc., River Edge, NJ, 1992.


\bibitem[Rau17]{Rau17}
S. Raum.
\newblock Lecture notes on abstract harmonic analysis.
\newblock Available online, 2017. \href{https://raum-brothers.eu/sven/data/teaching/2016-17/AHA/aha-lecture-notes-2017-05-18.pdf}{\nolinkurl{https://raum-brothers.eu/sven/data/teaching/2016-17/AHA/aha-lecture-notes-2017-05-18.pdf}}

\bibitem[Sto13]{Sto13}
E. St\o rmer.
\newblock Positive linear maps of operator algebras.
\newblock Springer Monographs in Mathematics. Springer, Heidelberg, 2013.

\bibitem[Sun97]{Sun97}
V. S. Sunder.
\newblock Functional analysis. Spectral theory.
\newblock Birkh\"auser Adv. Texts Basler Lehrb\"ucher. Birkh\"auser Verlag, Basel, 1997.

\bibitem[Tak02]{Tak02}
M. Takesaki.
\newblock Theory of operator algebras. I. Reprint of the first (1979) edition.
\newblock Encyclopaedia of Mathematical Sciences, 124. Operator Algebras and Non-commutative Geometry, 5. Springer-Verlag, Berlin, 2002.

\end{thebibliography}
\end{document}